\documentclass[12pt,reqno]{amsart}

\usepackage[letterpaper,margin=1in,footskip=0.25in]{geometry}
\usepackage{amssymb}
\usepackage[only,llbracket,rrbracket]{stmaryrd}
\usepackage{microtype}
\usepackage{mathtools}
\usepackage{enumitem}
\usepackage[noadjust]{cite}
\usepackage{hyphenat}
\usepackage{color}

\PassOptionsToPackage{dvipsnames}{xcolor}
\usepackage{tikz-cd}

\PassOptionsToPackage{
  pdfusetitle,
  colorlinks,
  pagebackref,
  linktocpage,
  bookmarksdepth=3
}{hyperref}
\usepackage{hyperref}
\usepackage{bookmark}
\hypersetup{
  citecolor=OliveGreen,
  linkcolor=Mahogany,
  urlcolor=Plum
}

\usepackage[hyphenbreaks]{breakurl}

\newtheorem{theorem}{Theorem}[section]
\newtheorem{corollary}[theorem]{Corollary}
\newtheorem{lemma}[theorem]{Lemma}
\newtheorem{proposition}[theorem]{Proposition}
\newtheorem{question}[theorem]{Question}

\theoremstyle{definition}
\newtheorem{definition}[theorem]{Definition}
\newtheorem{example}[theorem]{Example}

\theoremstyle{remark}
\newtheorem{remark}[theorem]{Remark}

\newtheoremstyle{cited}
  {.5\baselineskip plus .2\baselineskip minus .2\baselineskip}
  {.5\baselineskip plus .2\baselineskip minus .2\baselineskip}
  {\itshape}
  {}
  {\bfseries}
  {.}
  {5pt plus 1pt minus 1pt}
  {\thmname{#1}\thmnumber{ #2}\thmnote{ \normalfont#3}}

\theoremstyle{cited}

\theoremstyle{remark}

\newtheorem{fact}[theorem]{Fact}

\newtheoremstyle{citeddef}
  {.5\baselineskip plus .2\baselineskip minus .2\baselineskip}
  {.5\baselineskip plus .2\baselineskip minus .2\baselineskip}
  {}
  {}
  {\bfseries}
  {.}
  {5pt plus 1pt minus 1pt}
  {\thmname{#1}\thmnumber{ #2}\thmnote{ \normalfont#3}}

\theoremstyle{citeddef}

\DeclareMathOperator{\Ext}{Ext}
\DeclareMathOperator{\Tor}{Tor}
\DeclareMathOperator{\Frac}{Frac}

\DeclareMathOperator{\Spec}{Spec}
\DeclareMathOperator{\height}{ht}
\DeclareMathOperator{\depth}{depth}
\DeclareMathOperator{\id}{id}
\DeclareMathOperator{\pd}{pd}

\DeclareMathOperator{\Deg}{deg}
\DeclareMathOperator{\NBIM}{NBIM}
\newcommand{\up}[1]{{{}^{#1}\!}}

\newcommand{\W}{\textbf{W}}

\newcommand{\Cl}{\operatorname{Cl}}
\newcommand{\fm}{\mathfrak{m}}
\newcommand{\fn}{\mathfrak{n}}
\newcommand{\fp}{\mathfrak{p}}

\newcommand{\hooklongrightarrow}
  {\lhook\joinrel\longrightarrow}

\makeatletter
\def\l@subsection{\@tocline{2}{0pt}{2pc}{6pc}{}}
\makeatother

\begin{document}

\title[Homology of Infinite Integral Extensions]{Remarks on some Homological Problems regarding Infinite Integral Extensions}

\author{Mohsen Asgharzadeh and Shravan Patankar}

\address{Mohsen Asgharzadeh, Hakimiyeh, Tehran, Iran.}
\email{\href{mohsenasgharzadeh@gmail.com}{mohsenasgharzadeh@gmail.com}}

\address{Shravan Patankar, Farmington Hills, Michigan, USA.}
\email{\href{shravan.patankar@gmail.com}{shravan.patankar@gmail.com}}


\subjclass[2020]{Primary classification;13A35  Secondary classification: 13D07; 13B22.}
\keywords{absolute integral closure; vanishing of Ext, and Tor; regular rings; perfectoid algebras.}

\begin{abstract}
Let $R$ be an excellent local domain. $R$ is said to be $\NBIM$ if $\Tor_{i}^{R}(R^{+}, k) = 0$ for some $i\geq d:=\dim(R)$. Bhatt, Iyengar, and Ma ask if equi-characteristic zero $\NBIM$  rings are regular. If $R$ is of positive characteristic, Asgharzadeh and Mahdavi conjecture that $\Ext^{i}_{R}(k,R^{\infty}) = 0$ for some $i>d$ implies that $R$ is regular. It is an open question whether $R^{+}$ and $R^{\infty}$ are $\mathfrak{m}$-adically idealwise separated in positive characteristic, a condition from the `local criterion of flatness'. These are analogues of Kunz's theorem and intimately related to the homological conjectures and singularities in algebraic geometry.

We apply a result of Avramov, Hochster, Iyengar, and Yao on contracting endomorphisms to make progress on the first two. We observe that it implies toric $\NBIM$ rings are regular and solves the conjecture for $F$-pure rings. These improvements are inaccessible by previous techniques and give new and simple proofs of earlier results. In mixed characteristic, we show several linked results for perfectoid-pure rings. We show the third statement when there is $R\rightarrow S$ finite and flat on the punctured spectrum and $S$ is regular, this uses Cohen-Macaulayness of $S^{+}$.
\end{abstract}

\maketitle

\tableofcontents

\section{Introduction}
In characteristic $p>0$, the \emph{perfect closure} of a commutative ring $R$, denoted by $R^\infty$, adjoins all $p$-power roots to make the Frobenius map bijective, while the \emph{absolute integral closure} $R^+$ adjoins roots of all monic polynomials and is the maximal integrally closed extension. The systematic use of these closures was pioneered by Artin~\cite{Art}, and Hochster and Huneke~\cite{HH} famously exploited them as vast ``big'' Cohen--Macaulay algebras. More recently, Bhatt \cite{bat} has advanced these ideas through his work on big Cohen--Macaulay algebras and perfectoid techniques, extending their reach to the mixed characteristic setting.

By contrast, in equal characteristic zero, $R^+$ is far less studied,  as the subtle inseparability phenomena driving positive- and mixed-characteristic developments do not arise. Motivated by a question of Bhatt--Iyengar--Ma~\cite{bhatt2019Regular} on homological vanishing over large integral extensions, we investigate---in the equicharacteristic zero case and, more generally, together with their dual counterparts in positive characteristic---whether vanishing of (co)homology over extensions such as $R^+$ and $R^\infty$ can detect the regularity of a Noetherian local domain $(R,\mathfrak m,k)$. More precisely, our work is guided by the following problems.
\begin{itemize}
 \item[$(Q_1)$] \label{question:BIMquestion}  (See \cite[End of \S4, Question]{bhatt2019Regular}). Suppose \(R\) is of  equicharacteristic zero. If
\(
\Tor_i^R(k,R^+)=0
\)
for some \(i\ge 1\), must \(R\) be regular?

\item[(C)] (See \cite[Conjecture~5.19]{Elham}). Suppose \(R\) is  of characteristic \(p>0\). If
\(
\Ext_R^i(k,R^\infty)=0
\)
for some \(i>\dim(R)\), then \(R\) is regular.
\item[$(Q_2)$] (See \cite[Introduction]{Elham}). If \(R\) is   of equicharacteristic zero and \(\Ext_R^i(k,R^{+}) = 0\) for some \(i \geq \dim(R)+1\), then is \(R\) 
	regular?
\item[$(Q_3)$] (See \cite[Section 3]{Shravan}). Suppose $R$ 
is excellent and of characteristic $p>0$. Are
$R^+$ and $R^\infty$ $\fm$-adically idealwise separated as $R$-modules?
\end{itemize}

These questions serve as the starting point of our investigation into toric rings and contracting endomorphisms, where such vanishing phenomena can be analyzed explicitly.
Indeed, Section~3 is divided into three subsections devoted, respectively, to the equicharacteristic zero, positive characteristic, and mixed characteristic settings. We begin with the following theorem.

\begin{theorem}\label{main1}(See Theorem \ref{3.2}).
Let \(R\) be a normal toric ring containing \(\mathbb{Q}\). Then the following assertions hold.
\begin{itemize}
\item[(a)] If \(\Tor_i^R(k,R^+)=0\) for some \(i>0\), then \(R\) is regular.

\item[(b)] \(\Ext_R^i(k,R^+)=0\) for some \(i>\dim(R)\) if and only if \(R\) is regular.

\item[(c)] \(\operatorname{Gid}_R(R^+)<\infty\) if and only if \(R\) is Gorenstein.

\item[(d)] \(\operatorname{Gpd}_R(R^+)<\infty\) if and only if \(R\) is Gorenstein.
\end{itemize}
\end{theorem}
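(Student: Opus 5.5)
The plan is to use the result of Avramov, Hochster, Iyengar and Yao on contracting endomorphisms, as the abstract indicates. A normal toric ring $R=K[\sigma^\vee\cap M]$ containing $\mathbb{Q}$, localized at the irrelevant maximal ideal, carries for each integer $n\ge 2$ the endomorphism $\phi_n$ induced by $x^m\mapsto x^{nm}$ on monomials. It is finite and contracting, since $\phi_n(\fm)\subseteq \fm^2$. In characteristic zero, however, $\phi_n$ need not be flat in the needed sense, so we cannot simply quote Kunz. The first step is to realize $R$, via $\phi_n$, inside $R^+$. Concretely, the colimit $R_\infty:=\varinjlim(R\xrightarrow{\phi_n}R\xrightarrow{\phi_n}\cdots)$ is the monoid algebra on $\sigma^\vee\cap M[1/n]$. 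Adjoining all $n$-th roots of monomials sits inside $R^+$. The second step is the key structural claim: $R_\infty$ (or the full colimit over all $n$, namely $K[\sigma^\vee\cap M\otimes\mathbb{Q}]$) is an $R$-module direct summand of $R^+$. I would argue this via the trace/Reynolds operator. Every finite normal extension in characteristic zero splits off $R$, and, more precisely, $R^+$ is a filtered union of finite normal extensions $S\supseteq R_\infty$-pieces, each splitting compatibly through the normalized trace $\frac{1}{[L:F]}\operatorname{Tr}$. Hence any vanishing of $\Tor_i^R(k,R^+)$ or $\Ext^i_R(k,R^+)$ passes to $\Tor_i^R(k,{}^{\phi^e}R)$ and $\Ext^i_R(k,{}^{\phi^e}R)$ for suitable $e$, using that $\Tor$ commutes with direct limits and that the finite pieces ${}^{\phi^e}R$ are themselves summands.

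For (a), once $\Tor_i^R(k,{}^{\phi^e}R)=0$ for some $i>0$ and $e\gg0$, the theorem of Avramov--Hochster--Iyengar--Yao, that a finite contracting endomorphism $\phi$ with $\phi^e$ of finite flat dimension or with a single $\Tor$ vanishing forces regularity, gives that $R$ is regular. For the $\Tor$ version I would first use rigidity-type arguments to upgrade a single vanishing to finite flat dimension of ${}^{\phi^e}R$. Alternatively, I would cite the version of their theorem phrased with one vanishing $\Tor$. For (b), the forward direction is symmetric: pass from $R^+$ to a finite summand ${}^{\phi^e}R$ using $\Ext^i_R(k,-)$. Here $\Ext$ commutes with direct sums and summands, and for finitely generated $k$ it commutes with filtered colimits over the Noetherian ring $R$. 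We then get $\Ext^i_R(k,{}^{\phi^e}R)=0$ for some $i>\dim R$. Then we apply the injective-dimension form of the AHIY result together with the fact that a finitely generated module with such a vanishing has finite injective dimension. For the converse, if $R$ is regular then $\operatorname{id}$ of anything is at most $\dim R$, so vanishing above $d$ is automatic.

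For (c) and (d), the reverse directions follow because over a Gorenstein ring every module has Gorenstein dimensions at most $\dim R$. For the forward directions, finite Gorenstein injective or projective dimension passes to direct summands, so ${}^{\phi^e}R$ has finite Gid/Gpd. A Gorenstein analogue of AHIY (Falahola--Marley / the result that a contracting finite endomorphism with $\operatorname{G-dim}{}^{\phi^e}R<\infty$ forces Gorenstein) then concludes. For Gpd of a finitely generated module, this coincides with G-dimension. For Gid we dualize or use Foxby equivalence to reduce to the Gpd case.

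The main obstacle is the summand step: producing an $R$-linear splitting of $R^+\to{}^{\phi^e}R$ compatible with the module structure through $\phi^e$. I would first try exhibiting ${}^{\phi^e}R\cong K[\sigma^\vee\cap\frac1{n^e}M]$ as the $\mathbb{Q}$-graded piece of $K[\sigma^\vee\cap M_\mathbb{Q}]$ with degrees in $\frac1{n^e}M$. This is a summand by grading. Then I would show $K[\sigma^\vee\cap M_\mathbb{Q}]\to R^+$ splits by averaging trace over finite Galois subextensions.
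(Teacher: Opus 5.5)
Your proposal is correct and follows the paper's argument. The paper likewise notes that ${}^{\varphi}R$, i.e. the finite normal extension $K[\sigma^\vee\cap\frac{1}{n}M]$, is an $R$-direct summand of $R^+$ because finite extensions of a normal ring containing $\mathbb{Q}$ split. It then transfers the vanishing (or the finiteness of Gid/Gpd) to this finitely generated module and concludes with Roberts' Bass-number theorem, Avramov--Hochster--Iyengar--Yao, and Falahola--Marley.

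Two parts of your write-up are unnecessary. The detour through $K[\sigma^\vee\cap M_{\mathbb{Q}}]$ can be skipped, since the normalized trace splits $K[\sigma^\vee\cap\frac{1}{n^e}M]\subseteq R^+$ directly. No rigidity argument is needed either: for finitely generated $N$, a single vanishing $\Tor_i^R(k,N)=0$ already forces $\pd_R N<i$ by minimality of the free resolution.
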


We emphasize that item~(a) is not a characterization of regularity for small $i$. Indeed, if \(R\) is Cohen--Macaulay with \(\dim(R)>2\), then
\(
\Tor_1^R(k,R^+)\neq 0;
\)
see \cite[Theorem~3.2]{Shravan} and \cite[Theorem~1.2~(c)]{Elham}.
As an application, Theorem~\ref{main1} (a) removes the unnecessary direct summand hypothesis from \cite[Observation~5.7 (iii)]{Elham}. It also immediately yields the following corollary, which unifies and simplifies \cite[Proposition~5.6]{Elham} and \cite[Proposition~2.1]{RSS}, since the rings considered there are normal toric rings.

\begin{corollary}(See corollary~\ref{quad}).
Let
\(
R=k[x,y,z,w]/(xw-yz),
\)
where \(k\) is a field of characteristic zero. Then
\(
\Tor_i^R(k,R^+)\neq0
\)
for every \(i\ge0\).
\end{corollary}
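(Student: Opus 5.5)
The plan is to reduce the corollary to Theorem~\ref{main1}(a) together with a direct check of the small values of $i$. I read $R$ here as the localization (or the graded ring) at the irrelevant ideal $\fm=(x,y,z,w)$. Then $R$ is a normal toric ring: it is the Segre product $k[s u, s v, t u, t v]$ of two polynomial rings in two variables, which is the semigroup ring of a saturated affine semigroup. It contains $\mathbb{Q}$ because $\operatorname{char} k=0$, and it is not regular, since the defining equation $xw-yz$ lies in $\fm^2$, so the embedding dimension $4$ exceeds $\dim R=3$.

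\textbf{The case $i>0$.} This follows immediately from Theorem~\ref{main1}(a). If $\Tor_i^R(k,R^+)=0$ for some $i>0$, then $R$ would be regular, which is a contradiction. Hence $\Tor_i^R(k,R^+)\neq 0$ for every $i\ge 1$.

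\textbf{The case $i=0$.} Here $\Tor_0^R(k,R^+)=R^+/\fm R^+$. It suffices to check that $\fm R^+\neq R^+$, and this holds because $R^+$ is integral over $R$. By lying over, there is a maximal ideal of $R^+$ contracting to $\fm$. Therefore $\fm R^+$ is contained in that maximal ideal, so $R^+/\fm R^+\neq 0$.

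\textbf{Alternative for small $i$.} As a sanity check, the remark after Theorem~\ref{main1} already gives $\Tor_1^R(k,R^+)\neq0$ directly. Indeed, $R$ is a Cohen--Macaulay hypersurface of dimension $3>2$, so \cite[Theorem~3.2]{Shravan} applies.

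\textbf{Main obstacle.} The only step needing care is verifying the hypotheses of Theorem~\ref{main1}: that $R$, in the local or graded sense used there, is a normal toric ring. This is the standard identification of $k[x,y,z,w]/(xw-yz)$ with the Segre product, i.e.\ the toric ring of the cone over $\mathbb{P}^1\times\mathbb{P}^1$. Normality follows from saturation of the semigroup, or alternatively from Serre's criterion, since the singular locus is just the vertex.
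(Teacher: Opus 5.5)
Your proposal is correct and follows the paper's own route. The paper obtains Corollary~\ref{quad} by observing that $R$ is a non-regular normal toric ring containing $\mathbb{Q}$ and invoking Theorem~\ref{main1}(a), whose proof (a finite $R$-direct summand ${}^{\varphi}\!R$ of $R^+$ combined with Theorem~\ref{AHIY}) works for every $i>0$; your treatment of $i=0$ via lying over, giving $\fm R^+\neq R^+$, fills in the trivial case that the paper leaves implicit.
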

Clearly, Theorem~\ref{main1}~(b) confirms $(Q_2)$ for $\NBIM$ rings (see Definition \ref{3.1}).
Concerning items~(c) and~(d) from Theorem~\ref{main1}, we recall that \(\operatorname{Gid}_R(-)\) and \(\operatorname{Gpd}_R(-)\) stand for the Gorenstein injective and Gorenstein projective dimensions, respectively.
It is worth recalling that the following question was raised in \cite{Asg26}: \begin{center}does the finiteness of
\(
\operatorname{Gid}_R(R^\infty)
\)
force \(R\) to be Gorenstein?\end{center} The analogous question for the Gorenstein projective dimension of perfectoid algebras was answered affirmatively in \cite[Corollary 3.4]{Ryo}. To the best of our knowledge, however, no corresponding result is currently known for infinite integral extensions of rings containing \(\mathbb{Q}\).
Being a unique factorization domain is a weaker property than regularity. It was shown in \cite[Proposition~5.6]{Elham} that every $\NBIM$ of finite Cohen--Macaulay type is a unique factorization domain. We present another evidence:

\begin{corollary} (See Corollary \ref{tcl}).
Suppose that \( R \) is a local ring of equicharacteristic zero with torsion class group. If
\(
\Tor_i^R(k,R^+)=0,
\) for some $i>0$,
then \( R \) is a UFD.
\end{corollary}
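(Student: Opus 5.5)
We argue by reduction to the case where the divisor class group vanishes. Since the class group $\Cl(R)$ is torsion, every height-one prime $\fp$ has some symbolic power $\fp^{(n)}$ that is principal. Then $\fp$ becomes principal up to radical after adjoining an $n$-th root of a generator: if $\fp^{(n)}=fR$, set $S$ to be the normalization of $R[f^{1/n}]$ (or more systematically, a finite normal extension $R\subseteq S\subseteq R^{+}$ in which the pullback of every divisor class of $R$ becomes trivial, i.e.\ the index-one cover construction applied to a generating set of the torsion group $\Cl(R)$, assuming finite generation or working one class at a time). In characteristic zero the extension $R\to S$ is split by the trace map $\frac1{[L:K]}\mathrm{Tr}$, so $R$ is a direct summand of $S$, and moreover $S^{+}=R^{+}$.

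The key step is to transfer the vanishing hypothesis. Because $S$ is module-finite over $R$ and $R^{+}=S^{+}$, I would compare $\Tor^R_i(k,R^+)$ with $\Tor^S_i(\ell,S^+)$, where $\ell$ is the residue field of $S$; $S$ is local since its extension is obtained by adjoining roots of elements in $\fm$-primary-free fashion (or localize at a maximal ideal). More robustly, I would use the known fact, used in \cite{Elham} and \cite{bhatt2019Regular}, that $\Tor^R_i(k,R^+)=0$ for one $i>0$ forces $\Tor^R_j(k,R^+)=0$ for all $j\ge i$, and combined with flatness of $R^{+}$ over regular loci, that $\Tor^R_j(M,R^+)=0$ for all $j\gg0$ and all finitely generated $M$. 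The NBIM property then passes to direct summand situations in both directions (as in \cite[Observation~5.7]{Elham}).

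Finally, I would deduce the UFD property: given a height-one prime $\fp$ of $R$, consider the exact sequence $0\to \fp\to R\to R/\fp\to 0$ and tensor with $R^{+}$. Using the eventual Tor vanishing, $\fp$ has finite ``$R^{+}$-flat dimension''; since $\fp R^{+}$ becomes principal up to radical in $R^{+}$ (as $f^{1/n}\in R^{+}$ and $R^{+}$ is a Bézout-type ring on divisors, with all divisor classes trivial), one gets $\fp\otimes_R R^{+}\cong R^{+}$ type statements, and then descent via the trace splitting shows that $\fp$ is locally of rank one and reflexive with trivial class, i.e.\ $\fp$ is principal. Equivalently, the torsion class $[\fp]$ of order $n$ is killed: the cyclic cover $R\to R[f^{1/n}]$ would be étale in codimension one, and the Tor-vanishing combined with purity of the branch locus forces it to be trivial, whence $n=1$.

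The main obstacle is this last descent step: showing that finite-order classes must be zero using only vanishing of a single Tor. I would try first the cyclic-cover argument: the index-one cover $S=\bigoplus_{j=0}^{n-1}\fp^{(j)}$ is a finite $R$-algebra étale in codimension one, so $\fp^{(j)}$ appear as direct summands of $S\subseteq R^{+}$. The Tor vanishing (propagated to high degrees) then makes each $\fp^{(j)}$ have eventually vanishing $\Tor^R_{\gg0}(k,-)$ after base change, and I would aim to show $\fp^{(j)}$ are free, hence principal, giving $\Cl(R)=0$.
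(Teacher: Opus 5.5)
\textbf{Verdict: genuine gaps.} Your last paragraph reaches the paper's construction: the index-one cover $S=\bigoplus_{j=0}^{n-1}\fp^{(j)}$, in which $\fp$ is an $R$-direct summand, followed by ``finite projective dimension forces $\fp$ to be principal.'' But the two steps that carry the proof are either misjustified or missing.

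\textbf{First gap: transferring the vanishing.} To pass from $\Tor_i^R(k,R^+)=0$ to $\Tor_i^R(k,\fp)=0$, you need $S$ to be an $R$-direct summand of $R^+$. Knowing $S\subseteq R^+$ is not enough. You do take $S$ normal, but you use the trace only to split $R\to S$. That is the wrong direction: it moves vanishing from $S$ to $R$, not from $R^+$ to $S$.

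Your ``more robust'' substitute rests on claims that are false as stated.
\begin{itemize}
\item Vanishing of $\Tor_i^R(k,M)$ at a single $i>0$ does not propagate upward for non-finitely generated $M$. Over $R=k[[x,y]]$, the injective hull $E$ of $k$ has $\Tor_i^R(k,E)\cong\Ext^i_R(k,R)^\vee$, which is zero for $i=1$ but not for $i=2$. Known rigidity results need $i\ge\dim R$.
\item $R^+$ is in general not flat over a regular local ring of dimension $\ge 3$ in equicharacteristic zero; it is not even big Cohen--Macaulay there.
\item Even granting these claims, nothing in your sketch turns vanishing of $\Tor^R_{\gg0}(M,R^+)$ into vanishing of $\Tor^R(k,\fp^{(j)})$ without a splitting.
\end{itemize}

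The correct input is the splinter property applied to $S$ itself.
\begin{itemize}
\item Since $n$ is a unit, $S$ is \'etale over $R$ in codimension one and satisfies $(S_2)$, because its summands are reflexive.
\item Hence $S$ is a normal domain, taking $n$ to be the exact order of $[\fp]$.
\item Containing $\mathbb{Q}$, $S$ is a splinter, so $S\to S^+=R^+$ splits.
\item Then $\Tor_i^R(k,\fp)$ is a direct summand of $\Tor_i^R(k,R^+)=0$ for the same $i$. Since $\fp$ is finitely generated, $\pd_R\fp<i$.
\end{itemize}
No propagation is needed.

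\textbf{Second gap: concluding that $\fp$ is principal.} This is the step you call the main obstacle and leave as an aim. It is a standard fact, invoked in the paper as Kaplansky's trick: over a normal local domain, a divisorial ideal of finite projective dimension is principal. One proof goes by induction on dimension. By induction, $\fp$ is invertible on the punctured spectrum $U$. There it is isomorphic to the determinant of its finite free resolution, so $\tilde{\fp}|_U\cong\mathcal{O}_U$. Reflexivity then gives $\fp\cong R$. Alternatively, an ideal with a finite free resolution has the form $aJ$ with $\operatorname{grade}J\ge 2$, and divisoriality forces $\fp=aR$.

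Your third paragraph does not replace this step.
\begin{itemize}
\item The isomorphism ``$\fp\otimes_RR^+\cong R^+$'' does not follow from anything you state.
\item Purity of the branch locus is unavailable for a ring not yet known to be regular. Indeed, if $[\fp]\neq 0$, the index-one cover is finite and \'etale in codimension one, yet not \'etale, since its summand $\fp$ is not free. So appealing to purity is circular.
\end{itemize}
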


In the positive-characteristic setting, we establish the following result, which drops the isolated singularity hypothesis from \cite[Proposition 5.15]{Elham}.

\begin{theorem}\label{Ipure}(See  Theorem \ref{pur}).
Let \(R\) be an \(F\)-pure ring. If 
\(
\operatorname{Ext}_R^i(k, R^\infty) = 0
\)
for some \(i > \dim(R)\), then \(R\) is regular.
\end{theorem}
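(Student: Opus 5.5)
The plan is to use the result of Avramov, Hochster, Iyengar, and Yao on contracting endomorphisms. It says that if $\varphi\colon R\to R$ is a contracting endomorphism of a local ring, meaning $\varphi^n(\fm)\subseteq \fm^2$ for some $n$, and ${}^{\varphi}R$ denotes $R$ viewed as an $R$-module via $\varphi$, then $R$ is regular as soon as $\Ext^i_R(k,{}^{\varphi}R)=0$ for some $i>\dim R$. The same conclusion holds under the analogous $\Tor$ condition. Frobenius $F$ is contracting. So it suffices to show that the hypothesis $\Ext^i_R(k,R^\infty)=0$ forces $\Ext^i_R(k,{}^{F^e}R)=0$ for some $e$, or directly forces the vanishing for a finite-type module over a contracting map. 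This is where $F$-purity enters.

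First, I write $R^\infty=\varinjlim_e R^{1/p^e}$. Under $R^{1/p^e}\cong {}^{F^e}R$, this becomes $R^\infty \cong \varinjlim ({}^{F}R\to {}^{F^2}R\to\cdots)$, with transition maps given by Frobenius. Since $R$ is $F$-pure, each inclusion $R^{1/p^e}\to R^{1/p^{e+1}}$ is pure. The key step is to show that $R^{1/p^e}\to R^\infty$ is a pure map of $R$-modules, since a filtered colimit of pure maps is pure. In addition, $R^\infty/R^{1/p^e}$ should be viewed as an $R$-module that is a direct limit of the cokernels.

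The main obstacle is passing from purity to vanishing of $\Ext$. Purity yields injectivity on $\Tor$ and on $k\otimes -$, but $\Ext^i_R(k,-)$ with $k$ finitely presented commutes with filtered colimits. This holds because $k$ has a resolution by finite free modules, so $\Ext^i_R(k,\varinjlim M_e)=\varinjlim \Ext^i_R(k,M_e)$. For a pure map $M\to N$ with $M$ satisfying suitable finiteness, I want $\Ext^i_R(k,M)\to\Ext^i_R(k,N)$ to be injective. For finitely presented test modules, purity means $\Hom(L,-)$-type injectivity on the level of $\Ext$ computed by finite free complexes: $\Ext^i_R(k,M)=H^i(\Hom(P_\bullet,M))=H^i(M^{b_\bullet})$. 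Pure injections need not give injections on cohomology in general, so this step needs care.

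My first attempt will avoid this issue by completing. Replace $R$ by $\widehat R$, noting that $F$-purity, regularity, and the $\Ext$ condition are preserved via $\Ext^i_R(k,-)$ over finite-length data. When $R$ is complete and $F$-finite, $F$-purity equals $F$-splitting, so $R^{1/p^e}\to R^{1/p^{e+1}}$ splits. Even so, the colimit splitting is subtle. Instead I use that $\Ext^i_R(k,R^\infty)=\varinjlim_e \Ext^i_R(k,{}^{F^e}R)$ and that each transition map, induced by a split injection ${}^{F^e}R\to{}^{F^{e+1}}R$, is injective on $\Ext$, because split injections are preserved by additive functors. Hence $\Ext^i_R(k,{}^{F}R)$ injects into the colimit, which is zero. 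Then the Avramov–Hochster–Iyengar–Yao theorem applied to $\varphi=F$ gives that $R$ is regular.

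In the non-$F$-finite case, I would reduce to the $F$-finite setting via a $\Gamma$-construction, or use directly that $\Ext^i_R(k,-)$ is computed by finite free complexes. Purity then implies injectivity of $M\otimes_R \Hom(P_\bullet,R)\to N\otimes_R\Hom(P_\bullet,R)$ on cohomology, because $\Ext^i_R(k,M)\cong H^i(P_\bullet^*\otimes M)$ and the cokernel of a pure map is flat relative to finitely presented modules. I expect this to work by the standard fact that pure maps induce injections $\Ext^i(L,M)\to\Ext^i(L,N)$ when $L$ has a resolution by finitely generated free modules and $M\to N$ is pure. I will double-check this via the long exact sequence with the pure cokernel.
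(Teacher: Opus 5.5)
Your route is the paper's: $F$-purity makes ${}^{F}\!R\to R^\infty$ pure, this gives $\Ext^i_R(k,{}^{F}\!R)=0$, and then one invokes Avramov--Hochster--Iyengar--Yao. Your argument for the vanishing (split transition maps, plus $\Ext^i_R(k,-)$ commuting with direct limits) is fine, and it justifies this step more carefully than the paper's one-line appeal to purity.

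The gap is the final step. Theorem~\ref{AHIY} is not an $\Ext$-vanishing criterion. Its hypothesis is that ${}^{\varphi^i}\!M$ has finite injective (or flat) dimension. The version you state, where a single vanishing $\Ext^i_R(k,{}^{\varphi}\!R)=0$ with $i>\dim R$ already forces regularity, is not what that theorem says. In your write-up the assumption $i>\dim R$ is in fact never used; it is hidden inside the misquoted theorem. The missing ingredient is Roberts' theorem \cite{Ro2} on Bass numbers: for a finitely generated module $M$ over a local ring, $\mu^j(\fm,M)=\dim_k\Ext^j_R(k,M)$ is nonzero for every $j$ with $\depth M\le j\le\id M$. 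In the $F$-finite case ${}^{F}\!R$ is a finite $R$-module with $\depth_R({}^{F}\!R)\le\dim R<i$. So $\mu^i=0$ forces $\id_R({}^{F}\!R)<i$, and only then does Theorem~\ref{AHIY} apply. With this step inserted, your $F$-finite argument is exactly the paper's proof.

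This also means the second option in your last paragraph cannot replace a reduction to the $F$-finite case. The fact you quote is true: a pure exact sequence is a filtered colimit of split exact ones, and $\Ext^i_R(k,-)$ commutes with filtered colimits. So you do get $\Ext^i_R(k,{}^{F}\!R)=0$ without $F$-finiteness. But when ${}^{F}\!R$ is not finitely generated, Roberts' theorem is unavailable, and one vanishing Bass number does not bound $\id_R({}^{F}\!R)$. You must genuinely pass to an $F$-finite faithfully flat extension and then descend regularity, as the paper does.

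Your $\Gamma$-construction is a reasonable way to do this. Take $\Gamma$ small enough that $R^\Gamma$ is reduced. Then $R\subseteq R^\Gamma\subseteq R^\infty$, hence $(R^\Gamma)^\infty=R^\infty$, and flatness of $R\to R^\Gamma$ gives $\Ext^i_{R^\Gamma}(k\otimes_RR^\Gamma,R^\infty)\cong\Ext^i_R(k,R^\infty)=0$. What remains to be justified is $F$-purity of $R^\Gamma$, or else directly the vanishing of $\Ext^i_{R^\Gamma}(k\otimes_RR^\Gamma,{}^{F}\!R^\Gamma)$.
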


Recall from \cite[Definition 4.1]{ket} that, with \(p\) in its Jacobson radical, a ring \(R\) is called \textit{perfectoid-pure} if there exists a perfectoid \(R\)-algebra \(P\) such that the map \(R \to P\) is pure. Under this mixed-characteristic notion, a variant of Theorem~\ref{Ipure} implies that \(R\) is Gorenstein. In Proposition \ref{mainregularity} we present anower generalization of Theorem~\ref{Ipure}  in the setting of mixed characteristic rings.
Also, under an isolated singularity condition, we can show something stronger. Indeed, we obtain the following mixed-characteristic version of \cite[Proposition 5.17]{Elham}:

\begin{corollary}(See Corollary \ref{isom}).\label{mixedpure}
Let \((R, \mathfrak{m})\) be a mixed-characteristic local ring with an isolated singularity. If, for some perfectoid \(R\)-algebra \(P\) with \(P \neq \mathfrak{m}P\) and for some \(i > \dim(R)\), we have
\(
{\Ext}_R^i(k, P) = 0,
\)
then \(R\) is regular.
\end{corollary}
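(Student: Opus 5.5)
We would prove Corollary~\ref{mixedpure} by combining the isolated singularity hypothesis with the vanishing of $\Ext$ through a Koszul/local-cohomology argument, following the pattern of \cite[Proposition 5.17]{Elham} but replacing $R^\infty$ by the perfectoid algebra $P$. The first step is to show that $P$ has no high local cohomology obstruction coming from the punctured spectrum. Since $R$ has an isolated singularity, each localization $R_\fp$ for $\fp\neq\fm$ is regular. By the perfectoid analogue of Kunz's theorem (Bhatt--Iyengar--Ma), $P_\fp$ is then flat over $R_\fp$ (perfectoid algebras are $p$-completely flat over regular rings, and on the punctured spectrum we can localize). Consequently, for every finitely generated $R$-module $M$, the modules $\Ext^j_R(M,P)$ with $j>0$ are supported only at $\fm$, or at least are killed by a power of $\fm$ after localization.

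The second step converts the single vanishing $\Ext^i_R(k,P)=0$ with $i>d=\dim R$ into information about $\Ext^i_R(M,P)$ for all finite-length $M$, by induction on length using the long exact sequence; then vanishing for $i$ propagates to $\Ext^i_R(R/\fm^n,P)$ for all $n$. Passing to the limit gives $H^i_\fm(P)=0$ at the relevant index, which is automatic since $i>d$. The real content lies instead in using the isolated singularity to build the dual statement: take a minimal free resolution $F_\bullet$ of $k$ and its $d$-th syzygy $\Omega=\Omega^d k$. Since $R$ is an isolated singularity, $\Omega$ is locally free on the punctured spectrum. Dimension shifting gives $\Ext^{i-d}_R(\Omega,P)\cong\Ext^i_R(k,P)=0$. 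The aim is then to show $\Ext^{j}_R(\Omega,P)=0$ forces $\Omega$ to split off free summands or to have finite projective dimension.

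The third step is the key one, and we expect it to be the main obstacle. We would test against $P$ using $P\neq\fm P$: the minimal resolution $F_\bullet$ of $k$ has differentials with entries in $\fm$, so $\Hom_R(F_\bullet,P)$ has differentials with entries in $\fm$, and $\Ext^i_R(k,P)\otimes$-reductions modulo $\fm P$ do not vanish unless the Betti numbers stabilize to zero. More concretely, we would compare $\Ext^i_R(k,P)$ with $\Ext^i_R(k,R)\otimes_R P$ via the natural map, which should be an isomorphism up to modules supported on the punctured spectrum thanks to step one. This reduces the problem to the ring case, since $\Ext^i_R(k,R)\otimes P=0$ together with $P\neq\fm P$ and Nakayama-type arguments on the finite-length module $\Ext^i_R(k,R)$ gives $\Ext^i_R(k,R)=0$ for some $i>d$. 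By the Roberts/Foxby-type criterion, this gives $\id_R R<\infty$, so $R$ is Gorenstein, which matches the perfectoid-pure variant mentioned before the statement.

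Finally, with $R$ Gorenstein, we would upgrade to regularity by running the same comparison with $\Tor$ in place of $\Ext$ via local duality, $\Ext^i_R(k,P)\cong\Hom(\Tor_{i-d}^R(k,H^d_\fm(\cdots)),E)$-style identities, or more directly by applying Bhatt--Iyengar--Ma to deduce $\Tor_{j}^R(k,P)=0$ for some $j$. Their theorem then gives that $R$ is regular. The delicate points are the precise comparison map in step three and controlling the punctured-spectrum error terms.
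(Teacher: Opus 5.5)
There is a genuine gap, and it sits exactly at the step you flag as the key one. The natural map $\Ext^i_R(k,R)\otimes_R P\to\Ext^i_R(k,P)$ is an isomorphism when $P$ is flat over $R$. When $R\to P$ is pure you still get an injection $\Ext^i_R(k,R)\hookrightarrow\Ext^i_R(k,P)$. With neither hypothesis there is no reason for the map to be injective.

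Saying it is an isomorphism ``up to modules supported on the punctured spectrum'' gives no control. Both sides are killed by $\fm$, so any kernel or cokernel sits at $\fm$, which is precisely where $P$ may fail to be flat. Deducing that $R$ is Gorenstein from $\Ext^i_R(k,R)\subseteq\Ext^i_R(k,P)$ is the perfectoid-pure argument of Proposition~\ref{m515}. The whole point of this corollary is to remove purity, so this reduction to the ring case is unavailable.

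Your first step is also wrong as stated. A perfectoid algebra over a regular ring need not be flat, even $p$-completely: take $W(k)[\![x]\!]\to\widehat{W(k)[p^{1/p^\infty}]}$ with $x\mapsto 0$ and $k$ perfect. Moreover, flatness would not make $\Ext^{j}_{R_\fp}(-,P_\fp)$ vanish for $j>0$ anyway. What the isolated singularity actually gives is that $R_\fp$ has finite global dimension for $\fp\neq\fm$. Hence $\id_{R_\fp}P_\fp\le\dim R_\fp<\dim R$, with no flatness needed.

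Your final step has the same problem. To apply a Bhatt--Iyengar--Ma type criterion you need $P$ to have finite flat or injective dimension, or a Tor vanishing for $P$. A single vanishing $\Ext^i_R(k,P)=0$ does not give this by itself, since Roberts' no-gap theorem for Bass numbers covers only finitely generated modules.

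The paper bypasses the Gorenstein detour entirely:
\begin{itemize}
\item the rigidity theorem \cite[1.2]{IYENGAR} propagates the vanishing to $\Ext^j_R(k,P)=0$ for all $j\ge i$;
\item together with the bound on the punctured spectrum coming from the isolated singularity, \cite[Theorem VI.9]{sc3} then gives $\id_R P<\infty$;
\item finally \cite[Theorem 3.15]{Ryo}, the perfectoid analogue of ``$\id_R R^\infty<\infty$ forces regularity'', finishes.
\end{itemize}
The ideas missing from your proposal are this Ext-rigidity for non-finitely generated modules and the injective-dimension form of the perfectoid Kunz theorem. Your second step and the syzygy dimension shift are correct but play no role.
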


Our main theorem in \S4 is as follows, which confirms $(Q_3)$ in a particular case.

\begin{theorem}[See Theorem~\ref{thm:regular-cover-idealwise-separated}]
Let $(R,\fm)$ be an excellent local domain of characteristic $p>0$. Suppose
that there is a module-finite regular local extension
$S$ of $R$
such that $S/R$ is flat on the punctured spectrum of $R$.
Then $R^+$ is $\fm$-adically idealwise separated as an $R$-module.
\end{theorem}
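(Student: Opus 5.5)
The plan is to transfer the problem from $R$ to the regular ring $S$, where $S^+$ is flat, and to control the discrepancy between $R$ and $S$ using flatness on the punctured spectrum. Recall that an $R$-module $M$ is $\fm$-adically idealwise separated if $I\otimes_R M$ is $\fm$-adically separated for every ideal $I\subseteq R$.

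\emph{Step 1: reduce to $S$.} Since $S$ is a module-finite domain extension of $R$, we have $R^+=S^+$. For an ideal $I$ of $R$,
\[
I\otimes_R R^+\cong (I\otimes_R S)\otimes_S S^+ .
\]
Because $S$ is regular of characteristic $p$, Hochster--Huneke shows that $S^+$ is a big Cohen--Macaulay $S$-algebra, and hence faithfully flat over $S$. Put $N:=I\otimes_R S$, a finitely generated $S$-module. Since $\sqrt{\fm S}=\fn$, the $\fm$-adic and $\fn$-adic topologies on $S$-modules coincide. So it suffices to show that $N\otimes_S S^+$ is $\fn$-adically separated.

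\emph{Step 2: the torsion-free part.} First I would check the case $N=S$. If $x\in\bigcap_t \fn^tS^+$, then $x$ lies in some module-finite domain $B$ with $S\subseteq B\subseteq S^+$. Integrality then puts $x$ in the integral closure of $\fn^tB$ for every $t$. A valuation argument (Krull intersection for valuations centred on the maximal ideals of $B$) gives $x=0$. Flatness gives the same for free modules $S^r\otimes S^+$. Next, let $\bar N$ be $N$ modulo its torsion. Then $\bar N$ embeds into a free module $S^r$, and flatness of $S^+$ preserves this injection. Since $\bigcap_t\fn^t(\bar N\otimes S^+)\subseteq\bigcap_t \fn^t (S^+)^r=0$, the module $\bar N\otimes_S S^+$ is separated.

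\emph{Step 3: the torsion and the use of the hypothesis.} Here flatness of $S$ over $R$ on the punctured spectrum enters. For $\fp\neq\fm$, $S_\fp$ is $R_\fp$-flat, so $N_\fp\cong IS_\fp$ is torsion-free. Hence the torsion submodule $T$ of $N$ (equivalently, the kernel of $I\otimes_RS\to IS$, a quotient of $\Tor_1^R(R/I,S)$) has finite length, and is killed by $\fn^c$ for some $c$. Tensoring the sequence $0\to T\to N\to \bar N\to 0$ with the flat module $S^+$ gives an exact sequence
\[
0\to T\otimes S^+\to N\otimes S^+\to \bar N\otimes S^+\to 0 .
\]
Here $\fn^c$ kills $T\otimes S^+$, and the quotient $\bar N\otimes S^+$ is separated by Step 2.

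\emph{Main obstacle.} Separatedness is not in general preserved by extensions, so the real work is to conclude from this sequence that $N\otimes S^+$ is separated. Take $x\in\bigcap_t\fn^t(N\otimes S^+)$. Its image in $\bar N\otimes S^+$ vanishes, so $x\in T\otimes S^+$. I would then use Artin--Rees for $T\subseteq N$ over the Noetherian ring $S$: choose $t_0$ with $\fn^{t}N\cap T\subseteq \fn^{t-t_0}T$ for $t\ge t_0$. By flatness this inclusion survives base change to $S^+$, giving
\[
\fn^t(N\otimes S^+)\cap (T\otimes S^+)=(\fn^tN\cap T)\otimes S^+\subseteq \fn^{t-t_0}T\otimes S^+ .
\]
For $t\ge t_0+c$ the right side is $0$, so $x=0$. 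The point I expect to need the most care is the identity $\fn^t(N\otimes S^+)\cap(T\otimes S^+)=(\fn^tN\cap T)\otimes S^+$. This requires that flat base change commutes with finite intersections of submodules, which holds because $S^+$ is flat over $S$ (the cited Cohen--Macaulayness of $S^+$).
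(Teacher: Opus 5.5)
Your proposal is correct and follows essentially the same route as the paper: your torsion submodule $T$ of $I\otimes_R S$ is exactly the paper's finite-length kernel $K_I$ of $I\otimes_R S\to IS$, and $\bar N\cong IS$. You then conclude with the same Artin--Rees argument, using that flat base change along $S\to S^+$ commutes with intersections. The only differences are cosmetic: you work with $\fn$ instead of $\fm S$ (equivalent, since $\fm S$ is $\fn$-primary), and you give a valuative proof that $S^+$ is $\fn$-adically separated, which the paper takes as known.
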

\medskip
	\noindent
The following question by Shimomoto remains elusive, slightly rephrased from its original version: 
\begin{question}[{\cite[Question~4]{Shi10}}]
   Let $R$ be a domain of positive characteristic. If $R^{\infty}$ is coherent, must $R$ be a purely inseparable extension of a regular ring?
\end{question}

To our knowledge, techniques below do not provide any significant insights. The answer is not known in general even if one assumes $R$ is a Gorenstein UFD strongly $F$-regular complete local domain. For related results see \cite{Pat}, \cite{Simp}, \cite{AB} and \cite{moh}.

\subsection{Acknowledgments and AI disclosure}

We thank Elham Mahdavi, Shiji Lyu, and Sridhar Venkatesh for conversations. The main results of this paper were conceived after progress on Question $(Q_1)$ in \cite{Elham} and we would like to thank the authors for their continued interest.  

The second author used GPT-5.6 Sol (by Open AI) to assist in finding references and for background proofs of routine facts from algebraic geometry, such as existence of endomorphisms on toric varieties. No AI was used in proofs or discovery of the main results. All writing in this paper is entirely produced by the human authors based upon their own understanding and verification. See the Leiden Declaration on Artificial Intelligence and Mathematics \cite{Leiden} for additional guidance.

\section{Preliminaries}
Let $(R,\mathfrak m,k)$ be a Noetherian  local or graded ring.
The notation \(\pd_R(-)\) (resp. \(\id_R(-)\)) stands for the projective (resp. injective) dimension of \((-)\).
The absolute integral closure of an integral domain \(R\), denoted by \(R^{+}\), is the integral closure of \(R\) inside an algebraic closure of its fraction field.


Suppose now that $R$ is of prime characteristic $p>0$, and
$f: R \to R$
denotes the Frobenius endomorphism, given by $f(a)=a^p$ for every $a\in R$. For each integer $n\geq 0$, the $n$th iterate $f^n$ induces a new $R$-module structure on the underlying abelian group of $R$. We denote this module by ${}^{f^n}\!R$, where the scalar multiplication is defined by
\(
a\cdot b = a^{p^n} b,
\)
for $a,b\in R$.
Recall that
\[
R^\infty := \varinjlim \bigl( R \xrightarrow{f} R \xrightarrow{f} R \xrightarrow{f} \cdots \bigr).
\]

A natural extension of Frobenius beyond rings of prime characteristic is as follows.

\begin{definition}\label{contd}
A local ring homomorphism \( \phi: (R, \mathfrak{m}) \to (R, \mathfrak{m}) \) is called a \emph{contraction} if, for every \( r \in \mathfrak{m} \), the sequence \( (\phi^i(r))_{i \geq 1} \) converges to zero in the \( \mathfrak{m} \)-adic topology of \( R \).
\end{definition}

For the convenience of the reader, we recall the main result of Avramov, Hochster, Iyengar, and Yao (\cite[Theorem~1.1]{AHIY}):

\begin{theorem}\label{AHIY} \cite[Theorem 1.1]{AHIY}
    Let \(R\) be a local ring, and let \(\varphi\colon R \to R \) be a contracting endomorphism. If there exist a nonzero finite \(R\)-module \(M\) and an integer \(i\geq 1\) such that the \(R\)-module \({}^{\varphi^i}\!M\) has finite flat dimension or finite injective dimension, then \(R\) is regular.
\end{theorem}


\begin{definition}\label{def:flat-codimension}
Let $A$ be a Noetherian ring and let $N$ be an $A$-module. We say that $N$ is
\emph{flat in codimension one} if $N_{\fp}$ is flat over $A_{\fp}$ for every
$\fp\in\Spec(A)$ with $\height(\fp)\leq 1$.
If $(A,\mathfrak m)$ is local, we say that $N$ is
\emph{flat on the punctured spectrum} if $N_{\fp}$ is flat over $A_{\fp}$ for
every $\fp\neq\mathfrak m$.
\end{definition}

\begin{definition}\label{def:pure-map}
An injective homomorphism of $R$-modules
\(
N\to M
\)
is called \emph{pure} if
\(
L\otimes_RN\to L\otimes_RM
\)
is injective for every $R$-module $L$. It is called
\emph{cyclically pure} if
\(
(R/I)\otimes_RN\to(R/I)\otimes_RM
\)
is injective for every ideal $I\subseteq R$.
\end{definition}
By $R$ is $F$-pure, we mean $f: R \to R$ is pure.


We recall several notions related to separatedness. We emphasize that these are closely related to the notions of Ohm--Rush, Ohm--Rush trace, Mittag--Leffler, and intersection flatness. See \cite{DET}, \cite{EpsteinShapiro}, and \cite{HJ} for related definitions and results.

\begin{definition}\label{def:separation-notions}
Let $\mathfrak a\subseteq R$ be an ideal.
\begin{itemize}
\item[(a)] The module $M$ is said to be
\emph{$\mathfrak a$-adically separated} if
\(
\bigcap_{n\geq 1}\mathfrak a^nM=0.
\)

\item[(b)] The module $M$ is said to be
\emph{$\mathfrak a$-adically idealwise separated} if
\(
\bigcap_{n\geq1}
\mathfrak a^n\bigl(I\otimes_RM\bigr)=0
\)
for every ideal $I\subseteq R$.

\item[(c)] Following \cite{Zoeschinger}, the module $M$ is said to be
\emph{totally $\mathfrak a$-adically separated} if
\(
\bigcap_{n\geq1}
\mathfrak a^n\bigl(L\otimes_RM\bigr)=0
\)
for every finitely generated $R$-module $L$.
When $\mathfrak a=\fm$, we simply say that $M$ is separated,
idealwise separated, or totally separated, respectively.
\end{itemize}
\end{definition}

Since every ideal of a Noetherian ring is finitely generated, total
separatedness immediately implies idealwise separatedness.

\begin{remark}\label{rem:local-flatness-idealwise}
Most results in this paper can be generalized from $\mathfrak a$-adically idealwise separated to totally $\mathfrak a$-adically separated. We leave this to the reader. Some content about separatedness in this section overlaps with second author's PhD thesis \cite{Shravan} and \cite{Pat}.
\end{remark}

\begin{remark}\label{rem:local-flatness-idealwise}
The significance of Definition~\ref{def:separation-notions} comes from the local
criterion of flatness. Indeed, let $(R,\fm,k)$ be a Noetherian local
ring and let $M$ be an $\fm$-adically idealwise separated $R$-module.
Since $M/\mathfrak \fm M$ is automatically flat over the field
$R/\fm$, Matsumura's local criterion of flatness
\cite[Theorem~22.3]{mat} gives
\[
\Tor_1^R(k,M)=0
\quad\Longrightarrow\quad
M\text{ is flat over }R.
\]
\end{remark}


\begin{proposition}\label{prop:flat-separated-idealwise}
Let $(R,\fm)$ be a local ring and let $M$ be a flat, $\fm$-adically
separated $R$-module. Then $M$ is $\fm$-adically idealwise separated.
\end{proposition}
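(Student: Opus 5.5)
The argument is short: flatness turns $I\otimes_R M$ into a submodule of $M$, and then the Artin--Rees lemma shows that the $\fm$-adic topology of $M$ restricts to the $\fm$-adic topology of that submodule. Fix an ideal $I\subseteq R$.

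\textbf{Step 1.} Since $M$ is flat, applying $-\otimes_R M$ to $0\to I\to R$ gives an injection $I\otimes_R M\hookrightarrow R\otimes_R M=M$. Its image is $IM$, so we may identify $I\otimes_R M$ with $IM$. Under this identification, $\fm^n(I\otimes_R M)$ becomes $\fm^n IM$. The statement to prove is therefore
\[
\bigcap_{n\ge1}\fm^n IM=0 \quad\text{inside } M.
\]

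\textbf{Step 2.} Since $\fm^n IM\subseteq \fm^n M$ for every $n$, we get
\[
\bigcap_{n}\fm^n IM\subseteq \bigcap_n \fm^n M=0,
\]
using the hypothesis that $M$ is $\fm$-adically separated.

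The only point needing care is the identification in Step 1. It is legitimate because $\fm^n(I\otimes_R M)$ is computed through the $R$-module structure, and an injective $R$-linear map carries $\fm^n N$ isomorphically onto $\fm^n$ of its image. Hence the intersection in $I\otimes_R M$ maps injectively into $\bigcap_n \fm^n M$, which is zero.

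Two remarks on what is actually used. First, no Noetherian or Artin--Rees input is needed for idealwise separatedness: injectivity of $I\otimes_R M\to M$ plus separatedness of $M$ suffices. Second, the same argument yields total separatedness for finitely presented $L$ embedded in a free module $R^r$, via $L\otimes M\hookrightarrow M^r$. However, a general finitely generated $L$ need not embed in a free module, which is why the proposition is stated only for ideals.
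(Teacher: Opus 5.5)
Your proof is correct and is essentially the paper's argument: flatness identifies $I\otimes_R M$ with $IM\subseteq M$, and $\fm^n(I\otimes_R M)$ maps into $\fm^n M$, whose intersection is zero. The only blemish is your opening sentence invoking Artin--Rees, which the actual argument never uses (and which would not apply anyway, since $M$ need not be finitely generated); your closing remark already acknowledges this.
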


\begin{proof}
Let $I\subseteq R$ be an ideal. Since $M$ is flat, tensoring the
injection $I\hookrightarrow R$ with $M$ gives an injection
\(
I\otimes_RM\hooklongrightarrow R\otimes_RM\cong M.
\)
Its image is $IM$, so we may identify
\(
I\otimes_RM\cong IM\subseteq M.
\)
Let
\(
\alpha\in
\bigcap_{n\geq1}
\fm^n\bigl(I\otimes_RM\bigr).
\)
Under the preceding injection, the image of $\alpha$ belongs to
$\fm^nM$ for every $n$, since
\(
\fm^n\bigl(I\otimes_RM\bigr)
\to
\fm^nM.
\)
Therefore the image of $\alpha$ belongs to
\(
\bigcap_{n\geq1}\fm^nM=0.
\)
The map $I\otimes_RM\to M$ is injective, so $\alpha=0$.
Thus $I\otimes_RM$ is $\fm$-adically separated for every ideal
$I\subseteq R$.
\end{proof}

\begin{remark}\label{cor:flat-complete-total}
Let $(R,\fm)$ be a complete Noetherian local ring and let $M$ be a
flat, $\fm$-adically complete $R$-module. Then $M$ is totally
$\fm$-adically separated, Ohm-Rush, Ohm-Rush trace, Mittag--Leffler,
and intersection flat.
\end{remark}

\begin{proof}
The result follows from \cite[Corollary~4.3.14]{DET}.
\end{proof}

\section{Homological properties of infinite integral extensions}

In what follows, we assume some familiarity with Gorenstein homological algebra, see \cite{Totally}
for a nice survey. In particular, 
 $\operatorname{Gid}_R(-)$ (resp.~$\operatorname{Gpd}_R(-)$) stands for Gorenstein injective dimension (resp.~Gorenstein projective dimension).
\subsection{Rings containing \(\mathbb{Q}\)}

We begin with the following definition.

\begin{definition}\label{3.1}
We say that \(R\) is \textit{NBIM} (abbreviated for Normalized Bhatt--Iyengar--Ma) if
\(
\operatorname{Tor}_i^R(k, R^+) = 0
\)
for some \(i > \dim(R)\).
\end{definition}

\begin{remark}
Concerning {Definition} \ref{contd}, 
as \( R \) is Noetherian, it is equivalent to the existence of some integer \( i > 1 \) such that \( \phi^i(\mathfrak{m}) \subseteq \mathfrak{m}^2 \), since it suffices to check the condition on a finite generating set of \( \mathfrak{m} \).
The notion of a contraction extends naturally to the \( \ast \)-local graded setting. Indeed, let \( R := \bigoplus_{\alpha \in \mathbb{N}_0^t} R_\alpha \) be graded equipped with a unique maximal homogeneous ideal
\(
R_+=\bigoplus_{\alpha\neq 0}R_\alpha.
\) and let \( \phi: R \to R \) be a homogeneous ring homomorphism. Then \( \phi \) is called a contraction if 
\(
\phi^i(R_+) \subseteq (R_+)^2
\)
for some integer \( i > 1 \).
\end{remark}

\begin{remark}\label{admis}
In order to use the results on $\mathbb{Z}$-graded rings for affine
semigroup rings, \cite[Page 260]{BH} indicates that a decomposition
of the $k$-vector space $k[C]=\bigoplus_{t\in\mathbb{N}_0}k[C]_t$ is an \emph{admissible grading} if $k[C]$ is a positively
graded $k$-algebra with respect to this decomposition, and furthermore
each component $k[C]_t$ is a direct sum of finitely many $\mathbb{Z}^C$-graded components. 
\end{remark}
The following seems to be well-known.
\begin{fact} 
Let \(R=\bigoplus_{\alpha\in S}R_\alpha\) be a finitely generated \(S\)-graded \(k\)-algebra, where \(S\) is a positive affine semigroup and \(R_0=k\). Put
\(
\mathfrak m=R_+.
\) Then:
\begin{itemize}
\item[(a)]  $R$  is regular if  $R_{\mathfrak m}$   is regular.
\item[(b)] $R$  is Cohen-Macaulay if  $R_{\mathfrak m}$   is Cohen-Macaulay.  \item[(c)] If, in addition, \(R\) is Cohen--Macaulay and admits a graded canonical module \(\omega_R\), then
 $R_{\mathfrak m}$  is Gorenstein implies that
R is Gorenstein.
\end{itemize}
\end{fact}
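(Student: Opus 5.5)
The plan is to reduce all three parts to the standard graded-local principle: for a finitely generated $R$-module $M$ that is $S$-graded (equivalently, graded for an admissible $\mathbb{N}_0$-grading as in Remark~\ref{admis}), homological invariants at the homogeneous maximal ideal $\mathfrak m$ control those at every prime. First I would choose a positive $\mathbb{N}_0$-grading. Since $S$ is positive, there is a linear form $\ell$ with $\ell(\alpha)>0$ for all $0\neq\alpha\in S$. Setting $R_t=\bigoplus_{\ell(\alpha)=t}R_\alpha$ makes $R$ a positively graded $k$-algebra with $R_0=k$ and irrelevant ideal $\mathfrak m$, and each graded piece is a finite sum of $S$-graded pieces. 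So it suffices to treat $\mathbb{N}_0$-graded rings with $R_0=k$, which is the setting of \cite[\S1.5]{BH}.

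The key step is the following. For any prime $\mathfrak p$, let $\mathfrak p^*$ be the ideal generated by the homogeneous elements of $\mathfrak p$; it is prime and contained in $\mathfrak m$. For a finitely generated graded module $M$, one has $\pd_{R_{\mathfrak p}}M_{\mathfrak p}\le \pd_{R_{\mathfrak p^*}}M_{\mathfrak p^*}$, and similarly $\depth$ and $\dim$ differ by the same constant $\dim R_{\mathfrak p}/\mathfrak p^*R_{\mathfrak p}$. The localization $R_{\mathfrak p^*}$ is in turn a localization of $R_{\mathfrak m}$. I would simply cite \cite[Theorem~1.5.9, Proposition~1.5.15, Exercise~1.5.25]{BH} for these facts.

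With this in hand, the three parts go as follows.
\begin{itemize}
\item[(a)] If $R_{\mathfrak m}$ is regular, then $\pd_{R_{\mathfrak m}}k<\infty$. Localizing the graded minimal resolution of $k=R/\mathfrak m$ shows that it is finite over $R$ itself: the Betti numbers of a graded module over $R$ and over $R_{\mathfrak m}$ agree. Hence $\operatorname{gldim}R<\infty$, and $R$ is regular.
\item[(b)] Every localization of the Cohen--Macaulay ring $R_{\mathfrak m}$ is Cohen--Macaulay, so $R_{\mathfrak q}$ is Cohen--Macaulay for all $\mathfrak q\subseteq\mathfrak m$, in particular for all $\mathfrak p^*$. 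The depth/dimension comparison then gives the same for $R_{\mathfrak p}$. Alternatively, $\operatorname{grade}(\mathfrak m,R)=\depth R_{\mathfrak m}=\dim R$, and the graded version \cite[Corollary~2.2.15]{BH} applies.
\item[(c)] Gorensteinness amounts to $\omega_{R_{\mathfrak p}}\cong R_{\mathfrak p}$ for all $\mathfrak p$. Since $(\omega_R)_{\mathfrak m}\cong\omega_{R_{\mathfrak m}}\cong R_{\mathfrak m}$, the graded module $\omega_R$ is cyclic at $\mathfrak m$. Graded Nakayama makes $\omega_R$ cyclic, say $\omega_R\cong (R/J)(a)$. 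Since $\omega_R$ has full support and is faithful, $J=0$, so $\omega_R\cong R(a)$ and $R$ is Gorenstein. Equivalently, $\id_R\omega$ is finite and the comparison step above applies to injective dimension.
\end{itemize}

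The main obstacle is the comparison between $R_{\mathfrak p}$ and $R_{\mathfrak p^*}$, namely that $R_{\mathfrak p}$ is a flat extension of $R_{\mathfrak p^*}$ with regular closed fiber. I would take it from \cite{BH} rather than reprove it. The remaining work is the translation from $S$-gradings to $\mathbb{N}_0$-gradings via $\ell$.
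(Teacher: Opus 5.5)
Your reduction is the same as the paper's. The positive linear form $\ell$ is exactly the admissible grading supplied by \cite[Proposition~6.1.5]{BH} and Remark~\ref{admis}; after that the paper simply cites the $\mathbb{Z}$-graded exercises \cite[Ex.~2.2.24, 2.1.27, 3.6.20]{BH}. Your (b) and (c) are acceptable sketches of those exercises. For (c) you do need that a graded canonical module localizes to the canonical module at every prime, graded or not, which is in \cite[\S 3.6]{BH}.

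Part (a), however, has a gap as written. Localizing the graded minimal resolution of $k$ only gives $\pd_R k=\pd_{R_{\mathfrak m}}k<\infty$, which restates the hypothesis. Graded Betti numbers control only graded modules, whereas regularity of $R$ concerns every maximal ideal. Every maximal ideal other than $\mathfrak m$ is non-graded, since a proper graded ideal has zero degree-$0$ part and so lies in $\mathfrak m$. Thus ``hence $\operatorname{gldim}R<\infty$'' is exactly what has to be proved.

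The quickest repair is as follows. Put $d=\dim R_{\mathfrak m}=\dim R$ and choose homogeneous $x_1,\dots,x_d$ lifting a $k$-basis of $\mathfrak m/\mathfrak m^2$. Graded Nakayama gives $\mathfrak m=(x_1,\dots,x_d)$, hence $R=k[x_1,\dots,x_d]$. A surjection $k[X_1,\dots,X_d]\to R$ onto a ring of the same dimension is an isomorphism, so $R$ is a polynomial ring.

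Alternatively, you can use your own tools. $R_{\mathfrak p^*}$ is regular, so $\pd_{R_{\mathfrak p}}(R/\mathfrak p^*)_{\mathfrak p}=\pd_{R_{\mathfrak p^*}}k(\mathfrak p^*)<\infty$. For non-graded $\mathfrak p$, the ring $(R/\mathfrak p^*)_{\mathfrak p}$ is a DVR, being a localization of a graded field $K[t,t^{-1}]$. Hence $\pd_{R_{\mathfrak p}}k(\mathfrak p)<\infty$.

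Separately, the mechanism you attribute to \cite{BH} is backwards. Since $\mathfrak p^*\subseteq\mathfrak p$, it is $R_{\mathfrak p^*}$ that is a localization of $R_{\mathfrak p}$. There is no local homomorphism $R_{\mathfrak p^*}\to R_{\mathfrak p}$, let alone a flat one with regular closed fiber. Moreover, the inequality $\pd_{R_{\mathfrak p}}M_{\mathfrak p}\le\pd_{R_{\mathfrak p^*}}M_{\mathfrak p^*}$ you need is the non-trivial direction.

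That inequality does hold, in fact with equality, but for a different reason. Depth and dimension are compared through the homogeneous localization $R_{(\mathfrak p)}$ \cite[\S 1.5]{BH}. For projective dimension, let $\Omega N$ be a graded first syzygy of a finite graded module $N$. The non-free locus of $N$ is $\operatorname{Supp}\Ext^1_R(N,\Omega N)=V(J)$ for a graded ideal $J$. Since $J\subseteq\mathfrak p$ if and only if $J\subseteq\mathfrak p^*$, freeness of $N_{\mathfrak p^*}$ forces freeness of $N_{\mathfrak p}$. Applying this to the graded syzygies of $M$ gives the inequality. With these repairs your argument proves the Fact.
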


\begin{proof}
 Indeed, the desired claims are in \cite[Ex. 2.2.24~(c)]{BH}, \cite[Ex. 2.1.27~(c)]{BH}
and \cite[Ex. 3.6.20~(c)]{BH} in the $\mathbb{Z}$-graded case. It remains to apply {Remark} \ref{admis}, and recall from \cite[Proposition 6.1.5]{BH} that
any  positive affine semigroup  furnished with an admissible
grading.
\end{proof}

Although the presentation has been modified, the following gives the proof of ~\texorpdfstring{Theorem~\ref{main1}}{Theorem 1}.

\begin{theorem}\label{3.2}
Suppose that $R$ is a normal toric ring containing $\mathbb{Q}$, or more generally having a contracting endomorphism. The following holds:
\begin{itemize}
\item[(a)] $R$ \text{is}   $\NBIM$ iff $R$ is regular.
\item[(b)] 
\(
\Ext^i_R(k,R^+)=0
\)
for some $i>\dim(R)$ iff $R$ is regular.

\item[(c)]   $\operatorname{Gid}_R(R^+)<\infty$ iff $R$ is Gorenstein.

\item[(d)]   $\operatorname{Gpd}_R(R^+)<\infty$ iff $R$ is Gorenstein.
\end{itemize} 
\end{theorem}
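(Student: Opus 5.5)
The strategy is to realize $R^+$, as an $R$-module, as containing (or being a direct limit of) restrictions of scalars ${}^{\varphi^n}\!R$ along a contracting endomorphism $\varphi$, and then invoke Theorem~\ref{AHIY}. For a normal toric ring $R=k[C]$ (localized at $R_+$) with $\mathbb{Q}\subseteq k$, take $\varphi$ to be the multiplication-by-$\ell$ map on monomials, $x^c\mapsto x^{\ell c}$ with $\ell\ge 2$. It is a homogeneous contraction, since $\varphi(R_+)\subseteq (R_+)^2$ for $\ell\ge2$. It is also finite and injective, and it extends to the localization. The key observation is that $R\xrightarrow{\varphi}R$ is isomorphic to the inclusion $R\subseteq R^{1/\ell}:=k[\tfrac1\ell C]$, which is an integral extension inside $R^+$. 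By normality $R$ is a direct summand of $R^{1/\ell}$, via the projection onto the monomials lying in $C$. Moreover $R^{1/\ell}\cong{}^{\varphi}\!R$ as $R$-modules, and iterating gives $R^{1/\ell^n}\cong {}^{\varphi^n}\!R$. Thus $R^+$ is integral over $R_\ell:=\varinjlim_n {}^{\varphi^n}\!R=k[C\otimes\mathbb{Z}[1/\ell]]$.

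For the forward directions of (a) and (b), the plan is to transfer the vanishing from $R^+$ to a finite module of the form ${}^{\varphi^n}\!R$. First I would show that ${}^{\varphi^n}\!R$ is a direct summand of $R^+$ as an $R$-module. The field trace of the finite extension $\Frac(R^{1/\ell^n})\subseteq \Frac(R^+)$ is not available on $R^+$ itself, so instead I would take a finite normal extension $T$ with $R^{1/\ell^n}\subseteq T\subseteq R^+$. Since $\mathbb{Q}\subseteq R$, the normalized trace $\tfrac1{[T:R^{1/\ell^n}]}\operatorname{Tr}$ splits $R^{1/\ell^n}\to T$. Because $R^+=\varinjlim T$ and the normalized traces are compatible, this gives an $R^{1/\ell^n}$-linear retraction $R^+\to R^{1/\ell^n}$.

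Given this splitting, $\Tor_i^R(k,R^+)=0$ forces $\Tor_i^R(k,{}^{\varphi^n}\!R)=0$ for every $n$, and similarly for $\Ext$. Now ${}^{\varphi^n}\!R$ is a finite $R$-module. For (a), one vanishing $\Tor_i^R(k,N)=0$ with $N$ finite gives $\pd_R N<i<\infty$ by the standard minimal free resolution argument. Then ${}^{\varphi}\!R$ has finite flat dimension, and Theorem~\ref{AHIY} (with $M=R$) gives that $R_{\fm}$ is regular. The Fact then gives that $R$ is regular. For (b), one vanishing $\Ext^i_R(k,N)=0$ with $N$ finite and $i>\dim R$ does not immediately give $\id N<\infty$. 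Here I would use the degree-$\ell$ Frobenius-like structure instead. The direct limit $R_\ell=\varinjlim {}^{\varphi^n}\!R$ is a direct summand of $R^+$ by the compatible traces, and one can check that $\Ext^i_R(k,R_\ell)=\varinjlim \Ext^i_R(k,{}^{\varphi^n}\!R)$ since $k$ is finitely presented. I would then use the flatness of $R_\ell$ over the regular toric cover (the monoid $C\otimes\mathbb{Z}[1/\ell]$) to reduce to an $\Ext$ computation over a polynomial ring. I expect \textbf{this step in (b) to be the main obstacle}. The fallback is to use the Bass-number argument of \cite{Elham} together with the result of \cite{AHIY} on Ext versions of contracting endomorphisms.

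For (c) and (d), I would use the same summand argument with Gorenstein dimensions. Gorenstein injective and Gorenstein projective dimensions are closed under direct summands, so $\operatorname{Gid}_R({}^{\varphi}\!R)<\infty$, respectively $\operatorname{Gpd}_R({}^{\varphi}\!R)<\infty$. I would then invoke the Gorenstein analogue of Theorem~\ref{AHIY}, which says that for a finite module, finite Gorenstein dimension after restriction along a contracting $\varphi$ forces $R$ to be Gorenstein. This is also due to Avramov--Hochster--Iyengar--Yao and later Nasseh--Takahashi. The Fact (c) then passes the conclusion from $R_{\fm}$ to $R$.

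For the converses, suppose $R$ is regular. Then every module has flat and injective dimension at most $\dim R$, so $\Tor_i^R(k,R^+)=0$ and $\Ext^i_R(k,R^+)=0$ for all $i>\dim R$. If $R$ is Gorenstein, every module has $\operatorname{Gid}$ and $\operatorname{Gpd}$ bounded by $\dim R$, by the results surveyed in \cite{Totally}.
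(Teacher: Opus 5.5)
\textbf{Parts (a), (c), (d) follow the paper; part (b) has a genuine gap.} For (a), (c), (d) you do what the paper does. You realize ${}^{\varphi}\!R\cong R^{1/\ell}$ as an $R$-direct summand of $R^+$; your compatible normalized traces make explicit the retraction that the paper gets from the splinter property. You then apply Theorem~\ref{AHIY}, or its Gorenstein analogue. For that analogue the paper uses \cite[Theorem 5.5]{FF}. Before doing so it first transfers finiteness of $\operatorname{Gid}$ from $R$ to $R_{\fm}$ via \cite[Proposition 3.20]{Totally}, which needs a dualizing complex. Your sketch skips this localization step.

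The gap in (b) comes from a false claim. You say that $\Ext^i_R(k,N)=0$ for a single $i>\dim R$ and a finitely generated $N$ does not give $\id_R N<\infty$. It does, by Roberts' theorem \cite[2.2]{Ro2}:
\begin{itemize}
\item For a finite module $N$ over a local ring, the Bass numbers $\mu^j(N)=\dim_k\Ext^j_R(k,N)$ are nonzero for \emph{every} $j$ with $\depth N\le j\le \id_R N$.
\item So if $\id_R N=\infty$, no Bass number vanishes from $j=\depth N$ onward.
\item Since $\depth N\le\dim R<i$, the single vanishing $\mu^i(N)=0$ forces $\id_R N<\infty$ (and then $\id_R N=\depth R<i$).
\end{itemize}
Apply this to the finite module $({}^{\varphi}\!R)_{\fm}$, which is a direct summand of $(R^+)_{\fm}$. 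Then (b) runs exactly parallel to (a). Theorem~\ref{AHIY} already covers finite injective dimension, so $R_{\fm}$ is regular and no separate ``Ext version'' of \cite{AHIY} is needed. This is precisely the paper's argument.

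The route you propose instead does not hold together. The ring $k[C\otimes\mathbb{Z}[1/\ell]]$ is $R_\ell$ itself, and it is non-Noetherian. So ``flatness of $R_\ell$ over the regular toric cover'' names no actual regular ring and no flat map. Nothing in the sketch explains how an Ext computation over a polynomial ring would give information about $\Ext^i_R(k,-)$. Your fallback mentions a Bass-number argument but leaves out the no-gap theorem, which is the one ingredient that makes it work.
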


\begin{proof} 
 Since \(R\) is normal, every finite extension of \(R\) splits (see \cite[Lemma 2.1]{Shravan}).
In other words, there is a factorization 
\( 
R \to\up{\varphi}R  \to R^+
\)
whose composition is the natural inclusion. Consequently,
$\up{\varphi}R $ is an $R$-direct summand of $R^+$.
Let $n\in \mathbb{N}$ be a fixed integer. Let $B$ be a semi-group so that $R=k[B]$. It turns out that $B$ is positive as an affine semigroup. The assignment $b\mapsto b^n$ induces a contracting homomorphism.
It follows that $\up{\varphi}R$  is finitely
generated as an $R$-module. The ``if" parts of all items are clear, so we present only the proof of the ``only if" direction.

(a):  We have
\(
\operatorname{Tor}^R_i(k, \up{\varphi}R )=0.
\) 
so that
$ 
\operatorname{pd}_R(\up{\varphi}R )<\infty.
$ 
Finally, the regularity  criterion given in 
Theorem \ref{AHIY} implies that $R_{\fm}$ is regular, where $\fm$
is the unique maximal homogeneous ideal. Then 
$R$ is regular.

(b): We have
\(
\Ext^i_R(k, \up{\varphi}R )=0.
\) Recall  that $\up{\varphi}R$  is finitely
generated as an $R$-module. By \cite[2.2]{Ro2}
 $\mu ^i(\up{\varphi}R)\neq  0$ if and only if $i\in[\depth(\up{\varphi}R),\id(\up{\varphi}R)]$. Thus,
$ 
\operatorname{id}_R(\up{\varphi}R )<\infty
$.
Theorem \ref{AHIY} implies that $R_{\fm}$ is regular, and so
$R$ is regular.

(c): Since $R$ is a homomorphic image of a Gorenstein
ring of finite Krull dimension, and 
in view of \cite[Proposition 3.20]{Totally}, we observe that $\operatorname{Gid}_{R_{\fm}}((R^+)_{\fm})<\infty$, 
where $\fm$
is the unique maximal homogeneous ideal. Set $A:=R_{\fm}$. This is also equipped with a contraction map, again denoted by $\varphi$. Then
$\operatorname{Gid}_{A}(A^+)<\infty$.
Let $E$ be an injective $A$-module. Then, by definition, $\Ext^j_A(E,A^+)=0$ for all $j\gg 0$ (see \cite[Theorem 3.6]{Totally}). We apply this along with the first part to see  $\Ext^j_A(E,\up{\varphi}A )=0$ for all $j\gg 0$.  Then $\operatorname{Gid}_A(\up{\varphi}A)<\infty$.
Recall that $A$ is a homomorphic image of a Gorenstein ring. This allows us to apply  \cite[Theorem 5.5]{FF} and deduce that
 $A=R_{\fm}$ is Gorenstein. Then, by the above fact,
$R$  is Gorenstein.

(d): This case is dual to (c), and the requisite straightforward modification is left to the reader.
\end{proof}

The following corollary unifies and streamlines both \cite[Proposition 5.6]{Elham} and \cite[2.1]{RSS}, as the ring in each of those settings is normal and toric. Note that this ring is a Gorenstein rational singularity with an infinite divisor class group. 

\begin{corollary}\label{quad}
Let \( R = k[x,y,z,w]/(xw - yz) \), with \( k \) a field of characteristic zero. Then \( \Tor_i^R(k, R^+) \neq 0 \) for all \( i \geq 0 \).
\end{corollary}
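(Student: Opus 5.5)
The plan is to reduce everything to Theorem~\ref{3.2}(a) together with the splitting of finite extensions, treating small $i$ and large $i$ separately.

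First, $R=k[x,y,z,w]/(xw-yz)$ is a normal toric ring. It is the semigroup ring $k[B]$, where $B\subset\mathbb{N}^2\times\mathbb{Z}$ (or $\mathbb{N}^4$ modulo the relation) is generated by $a,b,c,d$ with $a+d=b+c$; for instance $x=s u,\ y=s v,\ z=t u,\ w=t v$ inside $k[s,t,u,v]$. This $B$ is a positive normal affine semigroup, so $R$ contains $\mathbb{Q}$ and admits the contracting endomorphism $b\mapsto nb$. Its localization at $\fm=(x,y,z,w)$ is a three-dimensional hypersurface singularity which is not regular, because the defining equation lies in $\fm^2$ of the polynomial ring.

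Next I split by the size of $i$.

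\emph{Case $i=0$.} Here $\Tor_0^R(k,R^+)=R^+/\fm R^+$. This is nonzero because $\fm R^+\neq R^+$: since $R^+$ is integral over $R$, lying over gives a maximal ideal of $R^+$ contracting to $\fm$.

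\emph{Case $i\geq 1$.} I use the argument in the proof of Theorem~\ref{3.2}(a), with "$i>\dim R$" replaced by an arbitrary $i\geq1$.
\begin{itemize}
\item Since $R$ is normal, the finite extension $R\to\up{\varphi}R$ splits off $R^+$ as a direct summand, so $\Tor_i^R(k,\up{\varphi}R)$ is a direct summand of $\Tor_i^R(k,R^+)$.
\item The module $\up{\varphi}R$ is finitely generated and graded. If one graded Betti number vanishes, the minimal graded free resolution terminates, so $\Tor_i^R(k,\up{\varphi}R)=0$ gives $\pd_R(\up{\varphi}R)<\infty$.
\item Theorem~\ref{AHIY} then forces $R_\fm$ to be regular, which is a contradiction.
\end{itemize}
Hence $\Tor_i^R(k,R^+)\neq0$ for every $i\geq1$. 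This is exactly the content of Theorem~\ref{main1}(a).

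The only real subtlety is the second bullet: vanishing at a single index $i\geq1$, not only $i>\dim R$, must imply finite projective dimension of the finitely generated module $\up{\varphi}R$. This is the standard fact that for a finitely generated module over a local (or $\ast$-local graded) ring, $\beta_i=0$ implies $\beta_j=0$ for all $j\geq i$. It is a consequence of the minimal resolution: if $\beta_i=0$ then $F_i=0$, so the resolution stops at $F_{i-1}$. I would note this explicitly, working after localizing at $\fm$ with $(R^+)_\fm$, which is harmless since $k$ is supported at $\fm$. Checking that the toric contraction satisfies the hypotheses of Theorem~\ref{AHIY} is routine: it is the $n$-th power map on monomials, so $\varphi(\fm)\subseteq\fm^n$.
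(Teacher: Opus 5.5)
Your proposal is correct and follows the paper's route: the paper deduces this corollary directly from Theorem~\ref{main1}(a), i.e.\ the argument in the proof of Theorem~\ref{3.2}(a), applied to the non-regular normal toric ring $R$. Your separate treatment of $i=0$, and your remark that the minimal-resolution argument gives $\pd_R(\up{\varphi}R)<\infty$ from vanishing at any single $i\geq 1$ (not only $i>\dim R$), supply exactly the details the paper leaves implicit.
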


\begin{remark}
    In \cite{RSS}, Roberts, Singh, and Srinivas show annihilation of local cohomology modules by elements of arbitrarily small order using the multiplication map on Abelian varieties. Then Albanese map is used to pull back (along the multiplication map) to get finite covers. It might be possible to obtain similar cohomology vanishing via pulling back along the contracting endomorphisms on toric varieties. A full exploration is outside the scope of this project.
\end{remark}

As observed in \cite{Shravan}, a question of Andr\'e and Fiorot \cite{AF} on fpqc analogues of splinters is related to the question of Bhatt, Iyengar, and Ma. It is likely that our results have some implications towards it as well (with suitable assumptions on the algebras, as noted in \cite{Shravan}), however we leave precise formulations to the reader.

\begin{lemma}(See also \cite[Proposition 4.2]{Sing}).
Let $R$ be a normal domain, and let $I$ be a divisorial ideal whose class has
finite order $n$ in the divisor class group $\Cl(R)$. Suppose
\(
I^{(n)}=uR
\)
for some $u\in R$, and let
\(
S=R[It,\ldots,I^{(i)}t^i,\ldots]/(ut^n-1)
\)
be the associated cyclic cover. Then $S$ admits a natural
$\mathbb{Z}/n\mathbb{Z}$-grading. Moreover,
\(
S=\bigoplus_{i=0}^{n-1} I^{(i)}t^i
\)
as a graded ring, and
as $R$-modules. In particular, $I$ is an $R$-module direct summand of $S$.
\end{lemma}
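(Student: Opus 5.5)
We work inside the Rees-type algebra $T:=\bigoplus_{i\in\mathbb Z} I^{(i)}t^i\subseteq K[t,t^{-1}]$, where $K=\Frac(R)$ and $I^{(i)}$ denotes the $i$-th symbolic power. For negative $i$ we set $I^{(i)}=(R:I^{(-i)})$, the divisorial fractional ideal of class $-i[I]$. The whole argument rests on the standard identity $I^{(i)}I^{(j)}\subseteq I^{(i+j)}$. We prove it by noting that the product is contained in its reflexive hull, and that hull is the divisorial ideal of class $(i+j)[I]$, since divisorial ideals modulo principal ones form $\Cl(R)$.

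Because $I^{(n)}=uR$, we have $I^{(i+n)}=uI^{(i)}$ for all $i$. To see this, note both sides are divisorial with the same divisor: $u\in I^{(n)}$ gives $uI^{(i)}\subseteq I^{(i+n)}$, and equality holds because multiplication by $u$ shifts divisors by $\operatorname{div}(u)=n\cdot D_I$.

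Next we identify $S$ with $T/(ut^n-1)T$. Here we read the displayed ring $R[It,\dots,I^{(i)}t^i,\dots]$ as the symbolic Rees algebra $\bigoplus_{i\ge0}I^{(i)}t^i$. The element $ut^n$ is then a unit of $T$ modulo $ut^n-1$, so inverting it gives the $\mathbb Z$-graded ring $T$. Concretely, $u^{-1}t^{-n}\in T$ because $u^{-1}\in I^{(-n)}=u^{-1}R$. Therefore the quotient by $ut^n-1$ coincides with the quotient of $T$ by the same element.

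For the $\mathbb Z/n$-grading, observe that $ut^n-1$ is a sum of two homogeneous terms of degrees $n$ and $0$. These degrees agree mod $n$, so $ut^n-1$ is homogeneous for the coarsened $\mathbb Z/n\mathbb Z$-grading $T_{\bar a}=\bigoplus_{i\equiv a}I^{(i)}t^i$. Hence the ideal it generates is graded, and the quotient $S$ inherits a $\mathbb Z/n$-grading.

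The main step is to show that the natural map $\bigoplus_{i=0}^{n-1}I^{(i)}t^i\to S$ is an isomorphism of $R$-modules, compatible with the grading. We define a retraction degree by degree. On $T_{\bar a}$, set
\[
\rho\bigl(x t^{a+qn}\bigr)=u^{q}x\,t^{a}\qquad (0\le a<n,\ q\in\mathbb Z).
\]
This lands in $I^{(a)}t^a$ by the identity $I^{(a+qn)}=u^{-q}I^{(a)}$ applied repeatedly. The map $\rho$ is $R$-linear, is the identity on $\bigoplus_{0\le a<n}I^{(a)}t^a$, and kills $(ut^n-1)T$. The last point follows because $\rho(ut^n\cdot y)=\rho(y)$ for homogeneous $y$: if $y=xt^{a+qn}$, then $ut^n y=ux\,t^{a+(q+1)n}$, and $\rho$ sends this to $u^{q+1}u^{-1}\cdot u\,x\,t^a$. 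The exponent bookkeeping needs care, but it amounts to checking that $\rho$ is invariant under the shift $y\mapsto ut^ny$, which is immediate from the formula. Moreover, every homogeneous element is congruent modulo $(ut^n-1)$ to its image under $\rho$, since $xt^{a+qn}-u^qxt^a=(\,(ut^n)^{-q}-1\,)\,u^qx\,t^{a+qn}$, with the obvious adjustment of signs. Together these two facts show that $\rho$ induces an inverse to the natural map, giving $S\cong\bigoplus_{i=0}^{n-1}I^{(i)}t^i$. The multiplication on $S$ is then the product $I^{(i)}I^{(j)}\subseteq I^{(i+j)}$, followed by dividing by $u$ when $i+j\ge n$.

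Finally, $I=I^{(1)}$ appears as the degree-$\bar 1$ summand, so $I$ is an $R$-direct summand of $S$. When $n=1$ we have $I=uR\cong R$, which is the degree-zero summand, so the claim also holds in that case.

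I expect the main obstacle to be justifying $I^{(i+n)}=uI^{(i)}$ and the multiplicativity of symbolic powers. This requires normality, to argue via divisors (equality of divisorial ideals at height-one primes). Everything else is formal grading bookkeeping.
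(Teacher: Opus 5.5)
Your argument has the same skeleton as the paper's. The $\mathbb{Z}/n\mathbb{Z}$-grading comes from the homogeneity of $ut^n-1$ when $\deg t=1$, and the graded pieces are identified with $I^{(i)}t^i$ for $0\le i\le n-1$. You go further on the \emph{uniqueness} of the expression $\sum_{i=0}^{n-1}x_it^i$. The paper only asserts it, on the grounds that ``$ut^n=1$ identifies $t^n$ with the unit $u^{-1}$''; this is heuristic, since $t^n$ is not itself an element of $\bigoplus_{i\ge0}I^{(i)}t^i$ unless $u$ is a unit. You make it precise by inverting $ut^n$, i.e.\ passing to $T=\bigoplus_{i\in\mathbb Z}I^{(i)}t^i$, and building an $R$-linear retraction onto $\bigoplus_{i=0}^{n-1}I^{(i)}t^i$ that kills $(ut^n-1)T$. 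That retraction is what actually proves injectivity of $\bigoplus_{i=0}^{n-1}I^{(i)}t^i\to S$. You also justify the identity $I^{(i+n)}=uI^{(i)}$, which the paper uses tacitly.

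As written, however, your key formula has the wrong sign. Iterating $I^{(i+n)}=uI^{(i)}$ gives $I^{(a+qn)}=u^{q}I^{(a)}$, not $u^{-q}I^{(a)}$, so the retraction must be
\[
\rho\bigl(xt^{a+qn}\bigr)=u^{-q}x\,t^{a}\qquad(0\le a<n,\ q\in\mathbb Z).
\]
With your $u^{q}$, the image lies in $u^{2q}I^{(a)}t^a$, which need not lie in $I^{(a)}t^a$ when $q<0$. The invariance also fails: $\rho(ut^n\cdot xt^{a+qn})=u^{q+2}xt^a\neq u^qxt^a$ in general.

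With the corrected sign, $\rho(ut^n\cdot xt^{a+qn})=u^{-(q+1)}u\,x\,t^a=\rho(xt^{a+qn})$. For homogeneous $y=xt^{a+qn}$ one gets $y-\rho(y)=\bigl(1-(ut^n)^{-q}\bigr)y\in(ut^n-1)T$, because $ut^n$ is a unit of $T$. This also agrees with your (correct) description of the multiplication as ``divide by $u$ when $i+j\ge n$''.

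There are two smaller slips:
\begin{itemize}
\item For $i<0$, the fractional ideal $(R:I^{(-i)})$ has class $i[I]$, not $-i[I]$.
\item The inclusion $I^{(i)}I^{(j)}\subseteq I^{(i+j)}$ should be argued with divisors, i.e.\ valuations at height-one primes. Knowing only that the reflexive hull of the product has the same \emph{class} as $I^{(i+j)}$ determines it only up to a factor in $\Frac(R)^{\times}$.
\end{itemize}
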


\begin{proof}
Endow $R$ with the trivial grading by $\mathbb{Z}/n\mathbb{Z}$, so that every
element of $R$ has degree $0$, and assign
\(
\deg(t)=1\in\mathbb{Z}/n\mathbb{Z}.
\)
Since $u\in R$, we have $\deg(u)=0$, and hence
\(
\deg(ut^n)=0+n=0
\)
in $\mathbb{Z}/n\mathbb{Z}$. Therefore the defining relation $ut^n-1$ is
homogeneous, and $S$ inherits a natural $\mathbb{Z}/n\mathbb{Z}$-grading.
For $0\le i\le n-1$, the homogeneous component of degree $i$ is
\(
S_i=I^{(i)}t^i,
\)
where $I^{(0)}=R$. Since the relation $ut^n=1$ identifies $t^n$ with the unit
$u^{-1}$, every element of $S$ can be uniquely written as
\(
\sum_{i=0}^{n-1}x_it^i\), where \( x_i\in I^{(i)}.
\)
Hence
\(
S=\bigoplus_{i=0}^{n-1}I^{(i)}t^i
\)
as a graded ring.
Finally, multiplication by $t^i$ induces an $R$-module isomorphism
\(
I^{(i)}\xrightarrow{\ \cong\ } I^{(i)}t^i\) via
\(x\mapsto xt^i.
\)
Therefore
\(
S\cong R\oplus I\oplus\cdots\oplus I^{(n-1)}
\)
as $R$-modules. In particular, the degree-one component
\(
It\cong I
\)
is an $R$-module direct summand of $S$.
\end{proof}

Being a unique factorization domain is a weaker property than regularity. It was shown in \cite[Proposition~5.6]{Elham} that every  $\NBIM$  of finite Cohen--Macaulay type is a unique factorization domain.
Here, is another sample.

\begin{corollary}
\label{tcl}
Suppose that \( R \) is a local (or \( \mathbb{N} \)-graded) ring of equicharacteristic zero with torsion class group. If
\(
\Tor_i^R(k,R^+)=0,
\) for some $i>0$,
then \( R \) is a UFD.
    
\end{corollary}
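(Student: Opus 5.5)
The plan is to show that every divisorial ideal of $R$ is principal, using the cyclic cover construction of the preceding lemma to realize divisorial ideals as $R$-direct summands of $R^+$. We take $R$ normal, which is implicit since $\Cl(R)$ is only defined for normal domains; in the $\mathbb{N}$-graded case we work with homogeneous divisorial ideals and localize at the irrelevant ideal at the end.

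Fix a divisorial ideal $I$. Since $\Cl(R)$ is torsion, its class has some finite order $n$, so $I^{(n)}=uR$ for some $u\in R$. Form the cyclic cover
\[
S=R[It,\ldots,I^{(i)}t^i,\ldots]/(ut^n-1).
\]
By the preceding lemma, $S\cong\bigoplus_{i=0}^{n-1}I^{(i)}$ as $R$-modules. In particular $S$ is a module-finite extension of $R$ and $I$ is an $R$-direct summand of $S$.

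Next I would embed $S$ into $R^+$. Because $\operatorname{char} k=0$, the extension $S$ is étale in codimension one over $R$ ($n$ is invertible), and $S$ is a normal ring. It may fail to be a domain, since it is a product of normal domains. In either case I choose a minimal prime, or equivalently a connected component $S'$ of $S$, which is a normal domain finite over $R$; then $S'\hookrightarrow R^+$. By \cite[Lemma 2.1]{Shravan}, every finite extension of the normal domain $R$ splits, and the same trace argument valid in characteristic zero makes $R^+=\varinjlim T$, over finite normal $T\supseteq S'$, split over $S'$ as well. Thus $S'$ is an $S'$-, hence $R$-, direct summand of $R^+$. The hypothesis $\Tor_i^R(k,R^+)=0$ then gives $\Tor_i^R(k,S')=0$.

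The main obstacle is that we need $I$ itself, not merely $S'$, to be a summand carrying the vanishing. I would handle this by arranging $S$ to be a domain. If $n$ is the exact order of $[I]$, the standard fact is that the index-$n$ cyclic cover is a domain, because $t^n=u^{-1}$ is not a $d$-th power in $\Frac(R)\cdot(\text{lower } I^{(j)})$ for $d\mid n$, $d>1$. We may replace $k$ by a finite extension containing the $n$th roots of unity if needed, which is harmless for Tor vanishing. Then $S'=S$, $I$ is a summand of $S$, and so $\Tor_i^R(k,I)=0$. Since $I$ is finitely generated over the local (or graded local) ring $R$, rigidity of minimal free resolutions (Betti numbers vanish beyond the first zero) gives $\pd_R I<\infty$.

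Finally, a reflexive rank one module $I$ of finite projective dimension over a normal local domain is free. The standard argument is that $I$ has a finite free resolution, so its class is trivial: the determinant of a finite free resolution of a rank-one module is free, and since $I$ is reflexive of rank one, $I\cong\det I$ is isomorphic to $R$ (Auslander--Buchsbaum / MacRae). Hence $[I]=0$ for every $I$, so $\Cl(R)=0$ and $R$ is a UFD. In the graded case we apply this to $R_{\fm}$ and use that the graded class group injects into $\Cl(R_{\fm})$.
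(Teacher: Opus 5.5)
Your proposal is correct and follows the same route as the paper. $I$ is an $R$-summand of the exact-order cyclic cover $S$, and $S$ is an $R$-summand of $R^+$ via the normalized trace. Hence $\Tor_i^R(k,I)=0$, so $\pd_R I<\infty$, and a rank-one reflexive module of finite projective dimension over a normal local domain is free; this last step is what the paper calls ``Kaplansky's trick''.

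You are more careful than the paper, which silently treats $S$ as a normal domain inside $R^+$. The residue field extension is unnecessary, though: if $g$ is an irreducible factor of $t^n-u^{-1}$ over $K=\Frac(R)$ with root $\tau$, then $N_{K(\tau)/K}(\tau)^n=u^{-\deg g}$ forces $(\deg g)[I]=0$ in $\Cl(R)$. So $n\mid\deg g$, and $S\otimes_RK$ is already a field.
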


\begin{proof}
 We know $R$ is normal.  
Assume, for contradiction, that there exists a non-principal ideal \( I \in \Cl(R) \). Let \( n >1\) be its order, so that \( I^{(n)} = uR \) for some (homogeneous) element \( u \in R \). Let $S$ be the cyclic cover 
Since the extension is finite, \( R \subseteq S \subseteq R^+ = S^+ \). Recalling that \( R \) is a splinter, we deduce that \( \Tor_i^R(k,S) = 0 \). 
In particular, \( p := \operatorname{pd}_R(S) < \infty \). By previous lemma \( I \cong It \) is a direct summand of a module of finite projective dimension, it follows that \( \operatorname{pd}_R(I) < \infty \). As \( R \) is normal and \( \height(I) = 1 \), Kaplansky's trick implies that \( I \) is principal, a contradiction. 
\end{proof}

See \cite{Tavanfar} and \cite[Corollary 4.1]{Tavanfar} for related results. We also note an ascent statement, with the hope of making progress towards Question \ref{question:BIMquestion}.

\begin{proposition}\label{prop:nbim-etale-ascent}
Let
\( 
(R,\fm,k)\to(S,\fn,\ell)
\)
be a finite, local, and \(\acute{e}\)tale homomorphism of excellent normal
local domains. If $R$ is $\NBIM$, then $S$ is $\NBIM$.
\end{proposition}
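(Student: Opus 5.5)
Since $R\to S$ is finite and $R$, $S$ are domains, the fraction field of $S$ is a finite extension of that of $R$. So an algebraic closure of $\Frac(S)$ is also one of $\Frac(R)$, and since $S$ is integral over $R$ we get $S^+=R^+$ as rings. The plan is to compare $\Tor^S_i(\ell,S^+)$ with $\Tor^R_i(k,R^+)$ by flat base change along $R\to S$. Note also that $\dim S=\dim R$ because the extension is finite and injective.

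\textbf{Step 1 (residue fields).} Since $R\to S$ is local and étale, $\fm S=\fn$ and $\ell$ is a finite separable extension of $k$. Hence $k\otimes_RS\cong S/\fm S=\ell$.

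\textbf{Step 2 (base change).} Because $S$ is flat over $R$, base change for Tor gives, for any $R$-module $M$ and $S$-module $N$,
\[
\Tor^R_i(M,N)\cong\Tor^S_i(M\otimes_RS,N).
\]
To see this, take an $R$-free resolution $F_\bullet\to M$. Then $F_\bullet\otimes_RS$ is an $S$-free resolution of $M\otimes_RS$, and $(F_\bullet\otimes_RS)\otimes_SN\cong F_\bullet\otimes_RN$. Apply this with $M=k$ and $N=S^+=R^+$, regarded as an $S$-module and, by restriction, as an $R$-module. With Step 1 this gives
\[
\Tor^S_i(\ell,S^+)\cong\Tor^R_i(k,R^+).
\]

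\textbf{Step 3 (conclusion).} If $\Tor^R_i(k,R^+)=0$ for some $i>\dim R=\dim S$, then $\Tor^S_i(\ell,S^+)=0$ for that same $i$, so $S$ is $\NBIM$.

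The main point to check carefully is the identification $S^+=R^+$ in Step 2. We need the $R$-module structure on $S^+$ obtained by restriction through $S$ to coincide with the natural $R$-module structure on $R^+$. This holds because both come from the inclusions $R\subseteq S\subseteq R^+$ inside a common algebraic closure, using that $S$ is a domain integral over $R$.

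Excellence and normality play no role beyond making $R^+$ and $S^+$ well behaved. The only property of étaleness actually used is flatness together with $\fm S=\fn$, that is, unramifiedness at the closed point.
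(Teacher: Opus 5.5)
Your proof is correct and is essentially the paper's argument: the paper identifies $\ell\cong S\otimes_R k$ and $S^+=R^+$, and then applies the flat base change isomorphism $\Tor^S_j(S\otimes_R k,S^+)\cong\Tor^R_j(k,S^+)=\Tor^R_j(k,R^+)$. Your additional details are the routine points the paper leaves implicit: $\fm S=\fn$, the equality $\dim S=\dim R$, and the transitivity-of-integrality argument for $S^+=R^+$. The one step you assert without justification is injectivity of $R\to S$, which holds because a flat local homomorphism is faithfully flat.
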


\begin{proof}
    We have $\ell \simeq S\otimes_{R} k$ and $S^{+} \simeq R^{+}$ and hence $$\Tor_{j}^{S}(S\otimes_{R}k,S^{+})\simeq \Tor_{j}^{R}(k,S^{+})=\Tor_{j}^{R}(k,R^{+})=0,$$
as desired.\end{proof}

\begin{remark}
    It would be interesting to know more permanence properties of NBIM rings, behavior under ultraproducts, quasi-\(\acute{e}\)tale extensions, and so on.
\end{remark}

\subsection{Rings containing \texorpdfstring{$\mathbb{F}_p$}{Fp}}

\begin{remark}The corresponding Ext-statement of \S3.1 cannot be obtained by simply dualizing the Bhatt--Iyengar-Ma statement. Although Matlis duality exchanges \(\operatorname{Ext}\) and \(\operatorname{Tor}\) via
\(
\operatorname{Ext}_R^i(k, R^+)^\vee \cong \operatorname{Tor}_i^R(k, (R^+)^\vee),
\)
the issue is that \((R^+)^\vee \ncong R^+\), e.g., \cite[Corollary 4.12]{bhatt2019Regular} says that
$H^+_{\fm}((R^+)^\vee)=0$ but  $H^d_{\fm}(R^+)\neq0$ at least in prime characteristic and $d>0$. Consequently, the question \((Q_1)\) from the introduction is dual to \((Q_2)\), but with the wrong module in the wrong position. Moreover, their full general statements (\cite[Theorem 2.1, Corollary 4.8]{bhatt2019Regular}) have a $JU \neq U$ assumption, which generally \emph{fails} for \((R^+)^\vee\) or modules involving injective hulls.\end{remark}

\begin{theorem}\label{pur}
Let \(R\) be F-pure. If 
\(
\operatorname{Ext}_R^i(k, R^\infty) = 0
\)
for some \(i > \dim(R)\), then \(R\) is regular.
\end{theorem}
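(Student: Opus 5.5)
Throughout, \(R\) is a Noetherian local ring \((R,\fm,k)\) of characteristic \(p>0\) with \(d:=\dim(R)\). The plan mirrors the proof of Theorem~\ref{3.2}(b): use \(F\)-purity to extract a finitely generated direct summand from \(R^\infty\) on which the Frobenius acts, and then apply Theorem~\ref{AHIY} with \(\varphi=f\). This is legitimate since the Frobenius on a local ring of characteristic \(p\) is a contracting endomorphism.

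First, \(R\) is reduced because it is \(F\)-pure, so each transition map in the system defining \(R^\infty\) is injective. We may therefore view \(R^\infty=\bigcup_n R^{1/p^n}\), and as \(R\)-modules \(R^{1/p^n}\cong {}^{f^n}\!R\). The map \(R\to {}^{f^n}\!R\) is pure for every \(n\), since compositions of pure maps are pure. We want \(\up{f}R\) to be an \(R\)-direct summand of \(R^\infty\); it suffices to show that \(R^{1/p}\to R^\infty\) splits.

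The issue is that purity does not give a splitting without finiteness. My first move is to pass to the completion \(\widehat R\). The hypothesis \(\Ext^i_R(k,R^\infty)=0\) should be preserved, because \(\Ext^i_R(k,-)\) is computed from a finite free resolution of \(k\) and the modules involved are \(\fm\)-torsion. \(F\)-purity also passes to \(\widehat R\). Once \(R\) is complete, \(\up{f}R\) is a finite \(R\)-module, and a pure map \(R\to\up{f}R\) between finite modules splits.

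Next I will rewrite \(R^\infty\) as a direct limit of copies of \(\up{f^n}R\) with pure transition maps, namely the maps \(\up{f^n}R\to\up{f^{n+1}}R\) obtained by applying restriction of scalars to \(R\to\up{f}R\). The aim is then to show that \(\up{f}R\) is a direct summand of \(R^\infty\).

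Here is the point I expect to be the main obstacle. A direct limit of split maps need not split. I will try to avoid splitting altogether and get Ext vanishing from purity directly. Since \(\up{f^n}R\to R^\infty\) is pure and \(\Ext^i_R(k,-)\) for finite length \(k\) commutes with direct limits, we have \(\Ext^i_R(k,R^\infty)=\varinjlim_n\Ext^i_R(k,\up{f^n}R)\). Each \(\Ext^i_R(k,\up{f^n}R)\) has finite length. The transition maps are injective, because a pure map with finite-dimensional source, after completion, splits, and applying the functor preserves the splitting. Hence the colimit vanishing forces \(\Ext^i_R(k,\up{f}R)=0\).

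Finally, \(\up{f}R\) is finitely generated over the complete ring, so by \cite[2.2]{Ro2} its Bass numbers \(\mu^j(\up{f}R)\) are nonzero exactly for \(j\in[\depth(\up{f}R),\id(\up{f}R)]\). Now \(\depth(\up{f}R)=\depth R\le d<i\) and \(\mu^i(\up{f}R)=0\), so this forces \(\id_R(\up{f}R)<\infty\). Theorem~\ref{AHIY}, applied with \(M=R\) and \(\varphi=f\), then shows that \(\widehat R\) is regular, and hence \(R\) is regular.
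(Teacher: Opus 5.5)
\textbf{The $F$-finite case is correct and matches the paper.} For $F$-finite $R$ your argument is the paper's. Purity of ${}^{f}\!R\subseteq R^\infty$ gives $\Ext^i_R(k,{}^{f}\!R)\hookrightarrow\Ext^i_R(k,R^\infty)=0$. Roberts' theorem on Bass numbers then forces $\id_R({}^{f}\!R)<\infty$, because $\depth({}^{f}\!R)=\depth R<i$. Theorem~\ref{AHIY} finishes.

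In this case you need neither the direct-limit detour nor completion. A pure injection $N\to M$ induces injections $\Ext^i_R(L,N)\to\Ext^i_R(L,M)$ for every finitely generated $L$ over a Noetherian ring (dimension-shift to $\Ext^1$ of a finitely presented syzygy), and no splitting is required.

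\textbf{The gap is the general case.} Your claim that ${}^{f}\!R$ becomes module-finite once $R$ is complete is false. A complete local ring is $F$-finite only when $[k:k^p]<\infty$; for $R=k[[x]]$ with $[k:k^p]=\infty$, the quotient ${}^{f}\!R/\fm\,{}^{f}\!R$ is infinite-dimensional. Without finite generation:
\begin{itemize}
\item Roberts' theorem does not apply;
\item $\Ext^i_R(k,{}^{f}\!R)$ need not have finite length;
\item a single vanishing $\Ext^i$ no longer controls $\id_R({}^{f}\!R)$.
\end{itemize}
The transfer of the hypothesis to $\widehat{R}$ is also unjustified. $R^\infty$ is not $\fm$-torsion, and flat base change only yields $\Ext^i_{\widehat{R}}(k,R^\infty\otimes_R\widehat{R})=0$. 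But $R^{1/p}\otimes_R\widehat{R}\neq\widehat{R}^{1/p}$ in general: for $R=k[x]_{(x)}$ with $[k:k^p]=\infty$ one gets $k^{1/p}\otimes_k k[[x^{1/p}]]\subsetneq k^{1/p}[[x^{1/p}]]$.

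\textbf{What is missing is the paper's reduction to an $F$-finite ring.} First extract $\Ext^i_R(k,{}^{f}\!R)=0$ over $R$ itself; this uses only purity. Then transfer \emph{this} vanishing, not the hypothesis on $R^\infty$, to a faithfully flat local extension $(S,\fn,\ell)$ with $\fm S=\fn$ and $S$ $F$-finite. For example, take $S=\overline{k}[[x_1,\dots,x_m]]/I\overline{k}[[x_1,\dots,x_m]]$ when $\widehat{R}\cong k[[x_1,\dots,x_m]]/I$.

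Concretely, let $P_\bullet$ resolve $k$ by finitely generated free $R$-modules, and let $\mathcal{F}_R$ be the Frobenius functor (raise matrix entries to the $p$th power). Then $\Ext^i_R(k,{}^{f}\!R)\cong H^i(\operatorname{Hom}_R(\mathcal{F}_R(P_\bullet),R))$. Moreover $\mathcal{F}_R(P_\bullet)\otimes_RS\cong\mathcal{F}_S(P_\bullet\otimes_RS)$; this is the isomorphism $(M\otimes_R{}^{f}\!R)\otimes_RS\cong(M\otimes_RS)\otimes_S{}^{f}\!S$ that the paper invokes. Flatness then gives $\Ext^i_S(\ell,{}^{f}\!S)\cong\Ext^i_R(k,{}^{f}\!R)\otimes_RS=0$.

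Your $F$-finite argument now applies to $S$, with no $F$-purity of $S$ required. So $S$ is regular, and regularity descends to $R$ by faithful flatness.
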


\begin{proof}
First, assume that \(R\) is F-finite. Recall from the purity of the extension \({}^f\!R \subseteq R^\infty\) that
\(
\operatorname{Ext}_R^i(k, {}^f\!R) \subseteq \operatorname{Ext}_R^i(k, R^\infty) = 0.
\)
Hence \(\operatorname{Ext}_R^i(k, {}^f\!R) = 0\). Since \({}^f\!R\) is finite over \(R\), and by \cite{Ro2} Bass numbers are nonzero only in the range 
\(
[\depth({}^f\!R), \, \operatorname{id}_R({}^f\!R)],
\)
we deduce that \(\operatorname{id}_R({}^f\!R)\) is finite. It then follows   that \(R\) is regular (e.g.  see \cite{AHIY}).
To remove the F-finite hypothesis, recall that there exists a local ring extension \((S, \mathfrak{n})\) of \((R, \mathfrak{m})\) such that \(\mathfrak{m}S = \mathfrak{n}\), \(S\) is F-finite, \(S\) is faithfully flat over \(R\), and \(S\) has infinite residue field. For instance, if \(\widehat{R} \cong k[[x_1,\dots,x_m]]/I\) for some ideal \(I\), we may take 
\[
S = \overline{k}[[x_1,\dots,x_m]] / I\overline{k}[[x_1,\dots,x_m]],
\]
where \(\overline{k}\) denotes the algebraic closure of \(k\).
Now, if \(M\) is an \(R\)-module, then
\[
(M \otimes_R {}^f\!R) \otimes_R S \cong (M \otimes_R S) \otimes_S {}^f\!S.
\]
Moreover, \(\Ext_R^i(M, -)\) commutes with flat base change whenever \(M\) is finitely generated over \(R\) (see \cite[Ex. 7.7]{mat}). Applying these observations, together with the F-finite case already established, we conclude that \(S\) is regular. Finally, by \cite[Theorem 2.2.12]{BH}, regularity descends along faithfully flat extensions, so \(R\) is regular as well.
\end{proof}

The following extends both Theorem \ref{pur} and \cite[5.17, 5.18]{Elham} proved by Mahdavi.
\begin{corollary}\label{purp}
 Suppose  
\(
\operatorname{Ext}_R^i(k(\fp), (R_{\fp})^\infty) = 0
\)
for some \(i \geq 2\dim(R)\) and all $\fp$ in F-pure locus of $R$, then \(R\) is regular.
\end{corollary}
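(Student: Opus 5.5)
Our plan is to reduce to Theorem \ref{pur} applied at a suitable prime, and then to spread regularity from the F-pure locus to all of $\operatorname{Spec}(R)$. Throughout we take $R$ to be local of characteristic $p$ (the setting of this subsection), and we use that the F-pure locus is nonempty and, as in \cite[5.17]{Elham}, contains the generic points (for a domain $R_{(0)}$ is a field, hence F-pure), and more generally every regular prime. The first step fixes a prime $\fp$ in the F-pure locus. Then $R_\fp$ is an F-pure local ring of dimension $\height(\fp)\le \dim(R)$. The hypothesis $i\ge 2\dim(R)$ then gives $i>\dim(R_\fp)$ whenever $\dim(R)\ge 1$; the case $\dim R=0$ is trivial, since a reduced Artinian local ring is a field. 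Since $(R_\fp)^\infty=(R^\infty)_\fp$ and $\Ext_R^i(k(\fp),(R_\fp)^\infty)$ is computed over $R_\fp$, Theorem \ref{pur} applies directly and shows that $R_\fp$ is regular. Hence the F-pure locus coincides with the regular locus.

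The second step, which is the real content, is to show that $\fm$ itself lies in the F-pure locus. Suppose $R$ is not regular. Choose a minimal element $\fq$ of the non-regular locus (equivalently of the non-F-pure locus, by step one), so that $R_\fq$ is regular on its punctured spectrum. We will then argue that the vanishing at the primes of the punctured spectrum of $R_\fq$ forces vanishing for $R_\fq$ itself. To do this we want a mixed-characteristic-free analogue of \cite[5.17]{Elham} for isolated singularities. Suppose $\Ext^j_{R_\fp}(k(\fp),(R_\fp)^\infty)=0$ for $j\ge 2\dim R$ at all $\fp$ with $\fp\subsetneq\fq$. The Bass-number bookkeeping along chains of primes, namely $\mu^{j}(\fp,M)\ne0\Rightarrow\mu^{j+\height(\fq/\fp)}(\fq,M)\ne0$, run in reverse, is where the doubled bound $2\dim(R)$ should be used. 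It should give $\Ext^j_{R_\fq}(k(\fq),(R_\fq)^\infty)=0$ for some $j>\dim R_\fq$. Then, since $R_\fq$ is an isolated singularity, we will invoke the argument of \cite[5.17]{Elham} (the isolated singularity result of Mahdavi, which the corollary claims to extend) to conclude that $R_\fq$ is regular, a contradiction.

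The main obstacle is this passage from primes in the locus to the prime $\fq$ outside it. Bass-number monotonicity goes upward, from $\fp$ to $\fq$, but nonvanishing, not vanishing, propagates that way. So we first try a different tool: since $R_\fq$ is regular on the punctured spectrum, $(R_\fq)^\infty$ is flat over the punctured spectrum, by Kunz applied at each prime. Local cohomology and Grothendieck-type vanishing then control $\Ext^j(k,(R_\fq)^\infty)$ for $j$ beyond $2\dim$. This should give the vanishing needed to apply the isolated-singularity case of \cite[5.17, 5.18]{Elham}. If $R$ is a local ring whose non-F-pure locus is empty, step one alone finishes the proof.
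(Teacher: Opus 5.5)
\textbf{There is a genuine gap, and it cannot be closed.}

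Your first step is correct and matches the paper's first line. Theorem~\ref{pur}, applied at each F-pure prime, shows that under the hypothesis the F-pure locus equals the regular locus.

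The gap is your second step. The hypothesis says nothing about non-F-pure primes, and the statement as written is false there. Take $R=k[[t^2,t^3]]$ with $k$ perfect of characteristic $p$.
\begin{itemize}
\item $R$ is not F-pure: Frobenius sends the socle class $\bar t\in H^1_\fm(R)=k((t))/R$ to $\overline{t^p}=0$. So its F-pure locus is $\{(0)\}$.
\item At $(0)$, with $K=\Frac(R)$, we have $\Ext^i_R(K,K^\infty)\cong\Ext^i_K(K,K^\infty)=0$ for all $i\ge1$, because $K\otimes_RK=K$ is flat over $R$.
\item So the hypothesis holds with $i=2=2\dim R$, yet $R$ is not regular.
\end{itemize}

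The same example defeats the mechanism you propose. Here $R^\infty=S^\infty$ with $S=k[[t]]$, so $R^\infty$ is flat on the punctured spectrum. Yet, by adjunction and since $S$ is hereditary,
\(\Ext^j_R(k,R^\infty)\cong\Ext^1_S(\Tor^R_{j-1}(k,S),S^\infty)\neq0\) for every $j\ge1$. The nonvanishing holds because the minimal $R$-resolution of $S$ is $1$-periodic: its first syzygy is $\fm=t^2S\cong S$.

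Two further points:
\begin{itemize}
\item The Bass-number inequality $\mu^j(\fp,M)\neq0\Rightarrow\mu^{j+\height(\mathfrak{q}/\fp)}(\mathfrak{q},M)\neq0$ holds only for finitely generated $M$; it fails for $M=E(R/\fp)$. So it is unavailable for $R^\infty$ in either direction.
\item Your $\dim R=0$ case tacitly assumes $R$ is reduced. Without a domain hypothesis, $k[x]/(x^2)$ has empty F-pure locus and is another counterexample.
\end{itemize}

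The paper's proof has the same hole. After the first step it simply asserts that $\Ext^i_R(k(\fp),(R_\fp)^\infty)=0$ for all $\fp$ in the singular locus. It then uses \cite[Theorem 1.3]{IYENGAR} to pass to all $j>i$, \cite[Theorem VI.9]{sc3} to get $\id_R(R^\infty)<\infty$, and \cite[Example 3.17(i)]{Ryo} to conclude.

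That route works once vanishing is actually assumed at the singular primes. At regular primes vanishing is automatic for $j>\dim R_\fp$, since $\operatorname{gldim}R_\fp=\dim R_\fp$. The argument is then global: the criterion that $\id_R(R^\infty)<\infty$ forces regularity replaces both your minimal-prime reduction and any appeal to an isolated-singularity theorem. But with the hypothesis restricted to the F-pure locus, as stated, neither your argument nor the paper's can be completed.
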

	
\begin{proof} 
By {Theorem}~\ref{pur} $R_p$ is regular. It follows that \(
\operatorname{Ext}_R^i(k(\fp), (R_{\fp})^\infty) = 0
\) for all $\fp$ in singular locus of $R$. 
In particular,  \(
\operatorname{Ext}_R^j(k(\fp), (R_{\fp})^\infty) = 0
\) for all $j>i,$  (see (\cite[Theorem 1.3]{IYENGAR}). These   enable  us to use \cite[Theorem VI.9]{sc3}, and deduce that \(R^\infty\) has finite injective dimension. 
	By \cite[Example 3.17(i)]{Ryo} the ring $R$ is regular as claimed.  
\end{proof}\begin{corollary}\label{purp2}
 Suppose  
\(
\operatorname{Ext}_{R_{\fp}}^i(k(\fp), (R_{\fp})^\infty) = 0
\)
for some \(i \geq  \dim(R)\) and all $\fp$ in F-pure locus of $R$, then \(R\) is regular.
\end{corollary}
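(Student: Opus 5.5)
The plan is to reduce to Theorem~\ref{pur} at every prime of the F-pure locus, and then show that the singular locus must be empty.

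\emph{Step 1: the F-pure locus is regular.} Let $\fp$ lie in the F-pure locus, so $R_{\fp}$ is F-pure. We are given
\[
\Ext_{R_{\fp}}^i(k(\fp),(R_{\fp})^\infty)=0
\quad\text{for some } i\geq\dim(R)\geq\dim(R_{\fp}).
\]
If $\dim R_{\fp}<\dim R$, this index satisfies $i>\dim(R_{\fp})$, so Theorem~\ref{pur} applies directly and $R_{\fp}$ is regular. If $\dim R_{\fp}=\dim R$ and $i=\dim R$, the index is exactly the borderline $i=\dim R_{\fp}$, which Theorem~\ref{pur} does not cover. Here I would rerun the proof of Theorem~\ref{pur} instead:
\begin{itemize}
\item First reduce to the F-finite case by the faithfully flat base change used there.
\item Purity of ${}^f\!R_{\fp}\subseteq (R_{\fp})^\infty$ gives $\Ext^i(k,{}^f\!R_{\fp})=0$.
\item Since ${}^f\!R_{\fp}$ is a finite module of dimension $d=\dim R_{\fp}$, we have $\depth({}^f\!R_{\fp})\le d$. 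So a vanishing Bass number at $i\ge d$ either lies beyond the injective dimension, giving finite injective dimension by \cite{Ro2}, or forces $\depth({}^f\!R_{\fp})>i\ge d$.
\item The latter is impossible, because $\depth\le\dim$ and $\mu^d(\fm,{}^f\!R)\neq0$ whenever the module is Cohen--Macaulay of dimension $d$.
\end{itemize}
I expect this borderline case to be the main obstacle. If the Bass-number argument fails at $i=d$, I would fall back on F-purity: the map $(R_{\fp})^\infty\to$ injective hull is controlled, and $H^d_{\fm}((R_{\fp})^\infty)\neq0$ should force $\Ext^d(k,-)\ne0$ unless the depth exceeds $d$, which is absurd. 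In either case, $\operatorname{id}({}^f\!R_{\fp})<\infty$, and Theorem~\ref{AHIY} with $\varphi=f$ gives regularity of $R_{\fp}$.

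\emph{Step 2: the singular locus is empty.} Regular rings are F-pure, so the F-pure locus contains the regular locus. By Step 1 the two loci therefore coincide. Now take a prime $\fq$ minimal in the singular locus, if one exists. Then $R_{\fq}$ is regular on its punctured spectrum. In particular, the hypothesis of the Corollary is only available at F-pure primes, so I must show that $R_{\fq}$ is F-pure. The case where $\fq=\fm$, with $R$ itself F-pure, is simply Step 1.

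So, as in Corollary~\ref{purp}, the proof needs the implicit standing assumption, presumably intended by the authors, that the closed point lies in the F-pure locus. Under that assumption, Step 1 at $\fp=\fm$ finishes the proof. Otherwise I would argue as in Corollary~\ref{purp}: use \cite[Theorem 1.3]{IYENGAR} to propagate the vanishing to all $j>i$, and \cite[Theorem VI.9]{sc3} to get $\operatorname{id}(R^\infty)<\infty$, and then \cite[Example 3.17(i)]{Ryo} gives that $R$ is regular.
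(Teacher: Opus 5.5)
\paragraph{The gap.} The ``Otherwise'' branch of Step~2 is a genuine gap, and it cannot be repaired: without the assumption that $\fm$ lies in the F-pure locus, the corollary is false.

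\emph{A counterexample.} Take $R=k[[x,y]]/(f)$ with $f=y^2-x^3$ and $k$ perfect of characteristic $p$.
\begin{enumerate}
\item Every monomial $y^{2(p-1-b)}x^{3b}$ of $f^{p-1}$ lies in $(x^p,y^p)$. Avoiding that ideal would need $(p-1)/2\le b\le (p-1)/3$, which is impossible. So $R$ is not F-pure, by Fedder's criterion.
\item The only other prime of $R$ is $(0)$, so the F-pure locus is $\{(0)\}$.
\item Since $R_{(0)}=\Frac(R)$ is a field, $\Ext^1_{R_{(0)}}(\Frac(R),(R_{(0)})^\infty)=0$.
\end{enumerate}
Thus the hypothesis holds with $i=1=\dim R$, yet $R$ is singular. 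More generally, any non-F-pure ring with singular locus $\{\fm\}$ is a counterexample, because $\pd_{R_\fp}k(\fp)=\dim R_\fp<\dim R$ for every $\fp\neq\fm$.

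\emph{Where the fallback breaks.} The route through Corollary~\ref{purp} (propagating vanishing, then computing $\id_R(R^\infty)$ from Bass numbers) needs $\Ext^j_{R_\fp}(k(\fp),(R_\fp)^\infty)$ to vanish at \emph{every} prime. The hypothesis says nothing at primes outside the F-pure locus. For the same reason, nothing can force $R_\fq$ to be F-pure at a minimal singular prime $\fq$.

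The paper gives no separate proof of this corollary. It is evidently patterned on Corollary~\ref{purp}, whose step ``It follows that \dots\ for all $\fp$ in singular locus'' is exactly this unjustified inference. So borrowing that argument does not help.

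\paragraph{What is correct.} Your conditional argument is correct, and your diagnosis of the missing standing assumption is right. If $\fm$ is in the F-pure locus, then $R$ is F-pure, F-purity localizes, and Theorem~\ref{pur} at $\fm$ handles the case $i>\dim R$.

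\paragraph{The borderline case $i=d=\dim R_\fp$.} This case is vacuous, not an obstacle, so you can drop both the ``fall back on F-purity'' hedge and the remark about Cohen--Macaulay modules.
\begin{enumerate}
\item F-purity makes $R_\fp\to(R_\fp)^\infty$ pure.
\item Pure maps induce injections on $\Ext^j(N,-)$ for finite $N$; use dimension shifting together with the lifting property of finitely presented modules along pure surjections. Hence $\Ext^d_{R_\fp}(k(\fp),R_\fp)\hookrightarrow\Ext^d_{R_\fp}(k(\fp),(R_\fp)^\infty)$.
\item The left side is nonzero, since $\mu^{\dim M}(\fm,M)\neq 0$ for every nonzero finite module $M$ over a local ring. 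This follows by climbing a saturated chain from a minimal prime of maximal dimension, since Bass numbers propagate up such chains.
\end{enumerate}
So the hypothesis never holds with $i=d$ at such a prime. This is consistent with the regular case, where $\Ext^d_R(k,R^\infty)\cong R^\infty/\fm R^\infty\neq 0$. Your own chain of reasoning ($\id({}^f\!R_\fp)<d$, hence regular by Theorem~\ref{AHIY}) is formally valid, but it derives regularity from an impossible hypothesis.

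\paragraph{Correct statement.} If $R$ is F-pure and $\Ext^i_R(k,R^\infty)=0$ for some $i\ge\dim R$, then $R$ is regular. This is exactly Theorem~\ref{pur} together with the vacuity remark above.
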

Suppose \(\id_R(R^\infty)<\infty\), and recall from 
\cite[Example 3.17(i)]{Ryo} that \(R\) is regular. Here, we study the deformation of this property.

\begin{proposition}
Let \(R\) be a Cohen-Macaulay  ring and let  $\underline{x}:=x_1,\ldots,x_d$ be a system of parameter. If 
\(
\operatorname{Ext}_R^i(k, R^\infty) = 0
\)
for some \(i > \dim(R)\), then \( \id_{{R}}(R^\infty/\underline{x} R^\infty)<\infty\).
\end{proposition}

\begin{proof}
Recall that $\underline{x}$ is both $R$-regular and $R^\infty$-regular. Set $i':=i-d>0$. One has 
$$
0=\operatorname{Ext}_R^i(k, R^\infty)\cong  \operatorname{Ext}_{\overline{R}}^{i'}(k, R^\infty/\underline{x} R^\infty),$$(see \cite[Theorem 10.77]{Rot}),
 where $\overline{R}:=R^\infty/\underline{x} R^\infty$. The module $\overline{R}$ is $\fm_{\overline{R}}$-torsion. Then its Matlis dual, taken over $\overline{R}$, is complete (see~\cite[\S 4.2, Lemma]{sim}). So, 
$$0=\operatorname{Ext}_{\overline{R}}^{i'}(k, R^\infty/\underline{x} R^\infty)^{\vee_{\overline{R}}}=\Tor^{\overline{R}}_{i'}(k, (R^\infty/\underline{x} R^\infty)^{\vee_{\overline{R}}}),$$(see  \cite[\S 4.1]{sim}).
By \cite[Proposition 2.1]{sim} we deduce that $\operatorname{fl.dim}_{\overline{R}}((R^\infty/\underline{x} R^\infty)^{\vee_{\overline{R}}})<\infty$,  and so $\id_{\overline{R}}(R^\infty/\underline{x} R^\infty)<\infty$. This implies that $\id_{R}(R^\infty/\underline{x} R^\infty) <\infty$, by the first theorem on injective change of rings. 
\end{proof}

\subsection{Rings of mixed characteristic}
Here, is a mixed version of {Theorem} \ref{pur}  and recall from  \cite[Definition 4.1]{ket} that, 
with $p$ in its Jacobson radical, $R$ is called
\textit{perfectoid-pure}  if there exists a perfectoid $R$-algebra $P$ such that $R\to P$ is pure. For being perfectoid in addition to \cite{ket} we assume $\fm P\neq P$.
\begin{proposition}\label{m515}
Let \((R,\fm)\) be of mixed characteristic  with isolated singularity and be  perfectoid-pure. If 
\(
\operatorname{Ext}_R^i(k, P) = 0
\)
for some perfectoid $R$-algebra $P$ and some \(i > \dim(R)\), then $R$ is Gorenstein.
	In particular,  \(R\) is regular
provided it is of isolated singularity.
\end{proposition}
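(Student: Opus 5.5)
We argue as in Theorem~\ref{pur}, replacing Frobenius by the perfectoid algebra $P$. First, perfectoid-purity makes $R\to P$ pure, hence cyclically pure. We would like to transfer the vanishing $\operatorname{Ext}^i_R(k,P)=0$ to a module-finite, or at least tractable, $R$-module. In contrast to the $F$-finite case, though, no finite submodule ${}^f\!R$ of $P$ with a pure inclusion is available. So we work with $R$ itself and use purity of $R\to P$.

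Since $k$ is finitely presented, purity of $R\to P$ gives injectivity of $\operatorname{Ext}$-maps. For a finitely presented $N$ and a pure map $A\to B$, one has $\operatorname{Hom}_R(N,A)\to\operatorname{Hom}_R(N,B)$ injective. However, pushing this to higher $\operatorname{Ext}$ is not formal. Instead we pass to Matlis duals: $\operatorname{Ext}^i_R(k,P)^\vee\cong\operatorname{Tor}_i^R(k,P^\vee)$. Purity of $R\to P$ says $P^\vee\to E=R^\vee$ is a split surjection of $R$-modules, because pure maps dualize to split maps. Hence $\operatorname{Tor}_i^R(k,E)$ is a direct summand of $\operatorname{Tor}_i^R(k,P^\vee)=0$. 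Dualizing back, we get $\operatorname{Ext}^i_R(k,\widehat R)=0$, i.e.\ $\mu^i(\widehat R)=0$, and hence $\mu^i(R)=0$, for some $i>\dim R$. Roberts' result \cite{Ro2}, as used in Theorem~\ref{3.2}(b), says Bass numbers of a finite module are nonzero exactly on $[\depth,\id]$. This forces $\id_R(R)<\infty$, so $R$ is Gorenstein.

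For the final claim we need to upgrade Gorenstein to regular. Under the isolated singularity hypothesis we would use the perfectoid module $P$ directly. A perfectoid algebra behaves homologically like $R^\infty$: by \cite{Ryo} (the mixed-characteristic analogue of Kunz, e.g.\ the Bhatt--Iyengar--Ma theorem), $\operatorname{fl.dim}_R P<\infty$ or $\id_R P<\infty$ forces regularity. So it suffices to get $\id_R(P)<\infty$. On the punctured spectrum $R_\fp$ is regular, so $\operatorname{Ext}^j_{R_\fp}(k(\fp),P_\fp)=0$ for $j>\dim R_\fp$. At $\fm$ we have one vanishing $\operatorname{Ext}^i_R(k,P)=0$ with $i>\dim R$. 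Since $R$ is now Gorenstein, the rigidity result \cite[Theorem 1.3]{IYENGAR} used in Corollary~\ref{purp} propagates this to all $j\ge i$. Then \cite[Theorem VI.9]{sc3} gives $\id_R P<\infty$, and \cite[Example 3.17]{Ryo} yields regularity.

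The main obstacle is the duality step in the second paragraph. Dualizing $\operatorname{Ext}$ into $\operatorname{Tor}$ requires care because $P$ is not finitely generated, and the isomorphism $\operatorname{Ext}^i_R(k,P)^\vee\cong\operatorname{Tor}^R_i(k,P^\vee)$ needs $k$ to be finitely generated, which it is, so this should go through. A second delicate point is the rigidity needed in the last paragraph. If \cite{IYENGAR} does not apply directly to $P$, we would instead use the isolated singularity to show that $H^j_\fm$-type truncations of $\mathbf{R}\operatorname{Hom}(k,P)$ are controlled by finitely many degrees.
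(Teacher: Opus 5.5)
Your proposal is correct and has the same skeleton as the paper's proof: purity of $R\to P$ gives $\Ext^i_R(k,R)=0$, Roberts' theorem \cite{Ro2} makes $R$ Gorenstein, and then $\id_R P<\infty$ together with \cite{Ryo} gives regularity. For perfectoid $P$ the paper cites \cite[Theorem 3.15]{Ryo}, whereas Example 3.17 is the $R^\infty$ case.

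The differences lie only in the justifications. You actually prove the injectivity $\Ext^i_R(k,R)\hookrightarrow\Ext^i_R(k,P)$, which the paper merely recalls, via the split surjection $P^\vee\to E$ and the isomorphism $\Ext^i_R(k,P)^\vee\cong\Tor_i^R(k,P^\vee)$. You also get $\id_R P<\infty$ by the rigidity-plus-isolated-singularity argument of Corollary~\ref{isom}, rather than the paper's appeal to \cite[page 226]{sc3}. As in that corollary, this step uses neither purity nor the Gorenstein conclusion, so your remark that Gorenstein is needed for the rigidity is unnecessary.
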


\begin{proof}We follow \cite[5.15]{Elham}.
Recall that 
		\(\operatorname{Ext}_R^i(R/\mathfrak{m}, R) \subseteq \operatorname{Ext}_R^i(R/\mathfrak{m}, P)\).
		So \(\operatorname{Ext}_R^i(R/\mathfrak{m}, R) = 0\).
Again, by \cite{Ro2},  \(R\) is Gorenstein. In particular, it is Cohen-Macaulay.    This and \(\Ext_R^i(k,P)=0\) enable us to use \cite[page 226]{sc3}, and deduce that \(P\) has finite injective dimension. By \cite[Theorem 3.15]{Ryo}\footnote{or, alternatively,
by Gorenstein property, we see \(P\) has finite flat dimension. In particular,  \(\operatorname{Tor}_i^R(R/\mathfrak{m}, P) = 0\) for some $i$. By \cite[4.4]{bhatt2019Regular} \(R\) is regular.}, \(R\) is regular.
\end{proof}

\begin{example}Let $p > 0$ be a prime and let $k$ be a perfect field of characteristic $p$.  
Let 
\(
f(x_1, \dots, x_n) \in \mathbb{Z}[x_1, \dots, x_n]
\)
be a homogeneous polynomial of degree $d \le n$, such that \emph{not all} coefficients of $f$ are divisible by $p$.
Consider the mixed-characteristic local ring
\(
R := W(k)[\![x_2, \dots, x_n]\!] \big/ \big( f(p, x_2, \dots, x_n) \big),
\)
where $W(k)$ denotes the ring of Witt vectors over $k$.  
If 
\(
\operatorname{Ext}_R^i(k, R_{perfd}) = 0
\)
for  some \(i \geq n\), then $\Deg(f)=1$, and so  \(R\) is regular.
\end{example}

\begin{proof}
    According to \cite[Example 6.7]{ket}  $R$ is perfectoid-pure. Now, apply {Proposition} \ref{m515}.
\end{proof}
Let us remove the  purity assumption from Proposition \ref{m515}.

\begin{corollary}\label{isom}
Let $(R, \mathfrak{m})$ be of mixed characteristic with an isolated singularity and let $P\neq \fm P$ be a  perfectoid $R$-algebra. Suppose that for some $i \geq \dim(R)$  we have
\(
\operatorname{Ext}_{R}^i(k, P) = 0.
\)
Then $R$ is regular.
\end{corollary}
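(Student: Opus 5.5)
We want to prove Corollary \ref{isom} by removing the purity hypothesis from Proposition~\ref{m515}, using the isolated singularity assumption to reduce to the punctured spectrum. The strategy is to show that $P$ has finite injective dimension over $R$ and then invoke \cite[Theorem 3.15]{Ryo}, as in the proof of Proposition~\ref{m515}. That result says a Noetherian local ring admitting a perfectoid algebra $P$ with $\fm P\neq P$ of finite injective (or flat) dimension is regular.

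\textbf{Step 1: vanishing in all higher degrees at $\fm$.} Starting from $\Ext^i_R(k,P)=0$ for some $i\geq\dim(R)$, we want $\Ext^j_R(k,P)=0$ for all $j\geq i$. The rigidity-type result \cite[Theorem 1.3]{IYENGAR}, already used in the proof of Corollary~\ref{purp}, is designed for this: a single vanishing $\Ext^i_R(k,M)=0$ in a large enough degree propagates upward. If that citation needs $i$ strictly beyond $\dim R$ or some depth condition, we would first pass to the completion. This is harmless: $\Ext^j_R(k,-)$ commutes with flat base change since $k$ is finitely generated, and $\widehat R\otimes_R P$ remains perfectoid with $\fm P\neq P$ after $p$-completion.

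\textbf{Step 2: the punctured spectrum.} For $\fp\neq\fm$, the ring $R_\fp$ is regular because $R$ has an isolated singularity. Hence every $R_\fp$-module has injective dimension at most $\dim R_\fp<\dim R$, so
\[\Ext^j_{R_\fp}(k(\fp),P_\fp)=0\quad\text{for } j>\dim R_\fp.\]
In particular all Bass numbers $\mu^j(\fp,P)$ vanish for $j\geq\dim(R)$ and $\fp\neq\fm$. Combined with Step 1, all Bass numbers of $P$ vanish at every prime in degrees $\geq i$.

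\textbf{Step 3: finite injective dimension and conclusion.} Since $P$ need not be finitely generated, vanishing of Bass numbers alone does not immediately give finite injective dimension. Here we apply \cite[Theorem VI.9]{sc3}, as in Corollary~\ref{purp}: a module over a Noetherian ring of finite Krull dimension whose Bass numbers vanish uniformly beyond a fixed degree at all primes has finite injective dimension. This gives $\id_R(P)\leq i$. Then \cite[Theorem 3.15]{Ryo} (cf.\ \cite[Example 3.17]{Ryo}) yields that $R$ is regular.

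\textbf{Main obstacle.} The delicate point is Step 1, namely propagating a single vanishing at the maximal ideal to all higher degrees for a non-finitely generated module $P$, with the sharp bound $i\geq\dim(R)$ instead of $i>\dim(R)$. If \cite[Theorem 1.3]{IYENGAR} does not directly apply, the fallback is a local cohomology argument. Since all primes $\fp\neq\fm$ have injective dimension $<\dim R$, the minimal injective resolution of $P$ in degrees $\geq\dim R$ consists only of copies of $E(k)$. Then $\Ext^j_R(k,P)=0$ at one such degree forces the corresponding term $E(k)^{\mu^j}$ to vanish, and the complex truncates there. This would also explain why $i=\dim(R)$ suffices.
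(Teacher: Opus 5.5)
Your proposal is correct and follows the paper's proof: you propagate the vanishing at $\fm$ via \cite{IYENGAR}, obtain $\id_R(P)<\infty$ from \cite[Theorem VI.9]{sc3}, and conclude with \cite[Theorem 3.15]{Ryo}, while your Step 2 makes explicit the use of the isolated singularity that the paper leaves implicit. Your fallback already finishes the job on its own: in degrees $\geq\dim R$ the minimal injective resolution of $P$ contains only copies of $E(k)$, so $\Ext^i_R(k,P)=0$ kills it from degree $i$ on, and hence neither the rigidity theorem nor the completion detour is needed (the latter's claim that $\widehat R\otimes_R P$ stays perfectoid is doubtful anyway).
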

\begin{proof}We follow \cite[5.17]{Elham}.
The result of \cite[1.2]{IYENGAR} applies to \(\operatorname{Ext}_R^i(R/\mathfrak{m}, P) = 0\) yields that \(\operatorname{Ext}_R^j(R/\mathfrak{m}, P) = 0\) for all \(j\ge i\). 
	   This   enables us to use \cite[Theorem VI.9]{sc3}, and deduce that \(R^\infty\) has finite injective dimension. 
	By \cite[Theorem 3.15]{Ryo} the ring $R$ is regular as claimed.\end{proof}

Similarly, we show:

\begin{corollary}
Let $(R, \mathfrak{m})$ be of mixed characteristic and let $P\neq \fm P$ be a  locally perfectoid $R$-algebra. Suppose that for some $i \geq \dim(R)$  we have
\(
\operatorname{Ext}_{R}^i(k(\fp), P_{\fp}) = 0
\) for all $\fp$ in singular locus $R$.
Then $R$ is regular.
\end{corollary}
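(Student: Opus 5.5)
Suppose $R$ is not regular; we derive a contradiction. The plan mirrors Corollary~\ref{isom} and Corollary~\ref{purp}: first localize at primes in the singular locus, then use the rigidity of Ext to spread the single vanishing to all larger degrees, and finally assemble these local vanishings into a global finite injective dimension statement. Throughout we assume $R$ is a homomorphic image of a Gorenstein ring, as elsewhere in the paper, so that the singular locus $V(J)$ is closed. Since $\fm$ belongs to $V(J)$, the hypothesis applies at $\fm$ as well.

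\textbf{Step 1: propagate the vanishing.} For each $\fp\in V(J)$, apply \cite[Theorem 1.3]{IYENGAR} to the $R_\fp$-module $P_\fp$. Here $P_\fp$ is perfectoid over $R_\fp$, and we use $P_\fp\neq \fp P_\fp$; this is where the hypothesis $P\neq\fm P$ and local perfectoidness are needed. The conclusion is
\[
\Ext^j_{R_\fp}(k(\fp),P_\fp)=0 \quad\text{for all } j\ge i .
\]
By flat base change this is the same as $\Ext^j_R(k(\fp),P_\fp)=0$.

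\textbf{Step 2: the regular locus.} For $\fp$ outside $V(J)$, the ring $R_\fp$ is regular. Hence every $R_\fp$-module has injective dimension at most $\dim R_\fp\le\dim R\le i$. So $\Ext^j_{R_\fp}(k(\fp),P_\fp)=0$ for $j>i$, with no hypothesis needed.

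\textbf{Step 3: globalize and conclude.} Bass numbers of $P$ at $\fp$ are computed by $\Ext^j_{R_\fp}(k(\fp),P_\fp)$, and Steps 1 and 2 show these vanish for all $\fp$ and all $j>i$. Then \cite[Theorem VI.9]{sc3} (Chouinard-type) gives $\id_R(P)<\infty$, bounded by $i$. Localizing at $\fm$, $P$ is a perfectoid $R$-algebra with $\fm P\neq P$ and finite injective dimension, so \cite[Theorem 3.15]{Ryo} yields that $R$ is regular.

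\textbf{Main obstacle.} The delicate point is Step 1. Applying \cite{IYENGAR} requires a nonvanishing or properness condition on $P_\fp$ at each singular prime $\fp$, not only at $\fm$, and $P_\fp\neq\fp P_\fp$ must be justified. If this fails at some $\fp$, then $P_\fp=\fp P_\fp$, and I would try to show directly that the Bass numbers of $P$ at $\fp$ vanish in high degrees in that case, for instance because such a module is "$\fp$-divisible" in a way that forces $\Ext^j_{R_\fp}(k(\fp),P_\fp)=0$ for $j\gg0$. If neither works, I would restrict to primes where $\fp P_\fp\ne P_\fp$, noting this holds at $\fm$ by hypothesis.
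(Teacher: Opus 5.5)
Your proposal is correct and follows the route the paper intends: the paper proves this ``similarly'' to Corollary~\ref{isom} and Corollary~\ref{purp}, using rigidity from \cite{IYENGAR} at singular primes, the bound $\id_{R_\fp}(P_\fp)\le\dim R_\fp$ at regular primes, Bass numbers together with \cite[Theorem VI.9]{sc3} to get $\id_R(P)<\infty$, and then \cite[Theorem 3.15]{Ryo}. Your ``main obstacle'' is not one, because the Ext-rigidity theorem holds for arbitrary modules with no condition such as $P_\fp\neq\fp P_\fp$, and the hypothesis $\fm P\neq P$ is used only in the final appeal to \cite{Ryo}; so drop the fallback of discarding primes, which would actually break the Bass-number step, and also drop the assumption that $R$ is a quotient of a Gorenstein ring, since your argument never uses it.
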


\begin{remark}
Let $R$ be a Noetherian local domain containing   such that $p \in \mathfrak{m} \setminus \mathfrak{m}^2$. Recall from \cite{ket} the first three items, which will be used in what follows.
	\begin{enumerate}
		\item[(H1)] The pair $(R, p)$ is \textit{perfectoid-pure}, meaning there exists a perfectoid $R$-algebra $B$ containing a fixed compatible system of $p$-power roots of $p$ in $B$, such that the induced map
		\(
		pR \to (pB)_{\mathrm{perfd}}
		\)
		is pure as a map of $R$-modules.
		
		\item[(H2)]The pair $(R, p)$ is \textit{lim-perfectoid-pure}, meaning  The induced map
		\(
		pR \to (p)_{\mathrm{perfd}}
		\)
		is pure in the derived category $D(R)$.
		
		\item[(H3)] The pair $(R, p)$ is \textit{lim-perfectoid injective} if the induced map
		\( 
		H^i_{\mathfrak{m}}(pR) \to H^i_{\mathfrak{m}}((p)_{\mathrm{perfd}})
		\)
		is injective for every $i$ and every maximal ideal $\mathfrak{m}$.\item[(H4)] The pair $(R, p)$ is \textit{d-pure}, meaning there  that the induced map
		\(
		R \to
		\W((R/(p))^\infty)\)
		is pure as a map of $R$-modules.
	\end{enumerate}    
\end{remark}	
 
\begin{proposition}
\label{mainregularity}
Under hypotheses (H1)--(H4), if
\(
\operatorname{Ext}_R^i(R/\mathfrak{m}, (pB)_{\mathrm{perfd}}) = 0
\)
for some $i > d := \dim R$, then $R$ is regular.   
\end{proposition}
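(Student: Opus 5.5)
The plan is to imitate the proofs of Proposition~\ref{m515} and Theorem~\ref{pur}. Purity transfers the vanishing to a small module, which is then shown to have finite injective dimension. Finally we invoke a regularity criterion for perfectoid algebras.

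\emph{Step 1: transfer to \(pR\).} By (H1) the map \(pR\to (pB)_{\mathrm{perfd}}\) is pure. Since \(R\) is a domain, \(pR\cong R\). Purity of a map of modules gives injectivity on \(\Ext_R^i(R/\fm,-)\), because \(R/\fm\) is finitely presented; one can also argue via (H2), where purity in \(D(R)\) makes \(R\) a derived direct summand after the appropriate completion. Hence
\[
\Ext_R^i(R/\fm,R)\hookrightarrow \Ext_R^i(R/\fm,(pB)_{\mathrm{perfd}})=0 .
\]
With \(i>d\), Roberts' result \cite{Ro2} on the nonvanishing range of Bass numbers gives \(\id_R(R)<\infty\), so \(R\) is Gorenstein, and in particular Cohen--Macaulay.

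\emph{Step 2: finite injective dimension of the perfectoid module.} By \cite[1.2]{IYENGAR}, as in Corollary~\ref{isom}, the single vanishing propagates to \(\Ext_R^j(R/\fm,(pB)_{\mathrm{perfd}})=0\) for all \(j\ge i\). To conclude finite injective dimension we must also control the nonmaximal primes. This is where (H3) and (H4) enter. Lim-perfectoid injectivity gives injectivity on local cohomology, and \(d\)-purity of \(R\to \W((R/(p))^\infty)\) lets us reduce modulo \(p\) to the positive characteristic perfect closure of \(R/pR\). Since \(p\in\fm\setminus\fm^2\), regularity of \(R/pR\) would imply regularity of \(R\). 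So an alternative route is to push the vanishing down to \(\Ext_{R/pR}^{i-1}(k,(R/pR)^\infty)\), using that \(p\) is a nonzerodivisor on the relevant perfectoid module together with Rees' lemma as in the Cohen--Macaulay deformation proposition above. One then applies the characteristic \(p\) machinery: Theorem~\ref{pur}, if \(R/pR\) is F-pure, which should follow from (H4) since \(d\)-purity is designed to force F-purity of \(R/p\). Otherwise we use \cite[Theorem VI.9]{sc3} with the Gorenstein property to get \(\id_R((pB)_{\mathrm{perfd}})<\infty\).

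\emph{Step 3: conclude.} Either \(R/pR\) is regular by Theorem~\ref{pur}, hence \(R\) is regular since \(p\notin\fm^2\). Or \((pB)_{\mathrm{perfd}}\) has finite injective dimension, and \cite[Theorem 3.15]{Ryo} gives regularity; equivalently, by Gorensteinness it has finite flat dimension, and \cite[4.4]{bhatt2019Regular} applies. The main obstacle is Step 2, specifically passing from vanishing at the maximal ideal to finite injective dimension without an isolated singularity hypothesis. I would first try the reduction modulo \(p\). Checking that \(p\) is regular on \((pB)_{\mathrm{perfd}}\) and that the quotient compares via (H4) to \((R/pR)^\infty\) is the delicate point.
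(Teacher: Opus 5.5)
Your Step~1 is correct. Purity of $pR\to(pB)_{\mathrm{perfd}}$ makes $\Ext^i_R(k,-)$ injective, so $\Ext^i_R(k,R)=0$, and \cite{Ro2} makes $R$ Gorenstein. This is the argument of Proposition~\ref{m515}, and it is sounder than the first sentence of the paper's proof here, which infers $\id_R((pB)_{\mathrm{perfd}})<\infty$ directly from one vanishing; you rightly distrust that inference. Your main route then follows the paper's skeleton: $R/p$ is F-pure, Rees's lemma \cite[Theorem 10.77]{Rot} descends the vanishing, Theorem~\ref{pur} makes $R/p$ regular, and $p\notin\fm^2$ lifts regularity to $R$. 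You take F-purity from (H4), whereas the paper gets F-injectivity from \cite[Proposition 6.5]{ket} and then uses Gorenstein plus F-injective implies F-pure. Your source does work: reducing the pure map $R\to\W((R/(p))^\infty)$ modulo $p$ gives a pure map from $R/p$ into the perfect ring $\W((R/(p))^\infty)/p\cong(R/(p))^\infty$, and a pure map into a perfect ring forces the Frobenius of $R/p$ to be pure.

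The genuine gap is the step you flag and leave open. Rees's lemma yields only
\[
\Ext^{i-1}_{R/p}\bigl(k,(pB)_{\mathrm{perfd}}/p(pB)_{\mathrm{perfd}}\bigr)=0,
\]
and even this needs $p$ to be a nonzerodivisor on $(pB)_{\mathrm{perfd}}$, which is not among the hypotheses since perfectoid rings may have $p$-torsion. Theorem~\ref{pur}, however, needs $\Ext^{i-1}_{R/p}(k,(R/(p))^\infty)=0$. Nothing in (H1)--(H4) connects these two modules, because (H4) concerns $\W((R/(p))^\infty)$, which no hypothesis relates to $B$. You would need an extra input, for example an $R/p$-linear pure map $(R/(p))^\infty\to(pB)_{\mathrm{perfd}}/p$, so that the vanishing transfers as in Step~1. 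The paper crosses this point only by quoting $\W((R/(p))^\infty)/p\simeq(R/(p))^\infty$, which identifies the reduction of the (H4)-algebra, not of $(pB)_{\mathrm{perfd}}$. If the vanishing hypothesis were placed on the $p$-torsion-free module $\W((R/(p))^\infty)$ instead, your route would close at once.

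Your fallback branch does not rescue the argument. A finite-injective-dimension criterion such as \cite[Theorem VI.9]{sc3} needs Bass numbers to vanish at all primes, and the Gorenstein property gives nothing at $\fp\neq\fm$. For instance, at a singular prime $\fp\neq\fm$ one has $\Ext^j_R(k,k(\fp))=0$ for all $j$, yet $\id_R k(\fp)=\infty$. So the either/or in your Step~3 reduces to the first branch, which is incomplete.
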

	
	\begin{proof}
The vanishing of $\operatorname{Ext}_R^i(k, (pB)_{\mathrm{perfd}})$ for $i > d$ forces the injective dimension of $(pB)_{\mathrm{perfd}}$ to be finite. Since the map $pR \to (pB)_{\mathrm{perfd}}$ is pure by (H1), we obtain $\operatorname{id}(pR) < \infty$, and because $R \cong pR$ as $R$-modules, it follows that $\operatorname{id}(R) < \infty$; hence $R$ is Gorenstein. Meanwhile, by \cite[Proposition~6.5]{ket}, the reduction $R/(p)$ is lim-perfectoid-injective, and since $\operatorname{char}(R/(p)) = p$, this translates precisely into $F$-injectivity in the positive-characteristic sense. As $R$ is Gorenstein, its quotient $R/pR$ is also Gorenstein; a Gorenstein ring that is $F$-injective is automatically $F$-pure, so $R/pR$ is $F$-pure. Now \cite[Theorem 10.77]{Rot} allows us to descend the Ext-vanishing to the special fiber: we obtain
\(
\operatorname{Ext}_{R/p}^{i-1}(R/\mathfrak{m}, (pB)_{\mathrm{perfd}}/p) = 0
\)
and note that $i-1 > d-1 = \dim(R/(p))$. There is a natural isomorphism $$\W((R/(p))^\infty)/p\W((R/(p))^\infty)\simeq (R/(p))^\infty.$$ Since $R/(p)$ is $F$-pure and satisfies this Ext-vanishing, the positive-characteristic criterion (see Theorem~\ref{Ipure}) implies that $R/(p)$ is regular. Finally, because $p \notin \mathfrak{m}^2$, the regularity of the special fiber lifts to the whole ring. 
\end{proof}

\section{Idealwise separatedness of \texorpdfstring{$R^+$}{R+}}

The main result of this section is 
{Theorem} \ref{thm:regular-cover-idealwise-separated}.
We begin by recalling the following facts.

\begin{fact}\label{thm:regular-plus-idealwise-separated}
Let $(R,\fm,k)$ be an excellent regular local domain of characteristic
$p>0$. Then $R^+$ is flat and $\fm$-adically idealwise separated as an
$R$-module.
\end{fact}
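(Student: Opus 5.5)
The plan is to split the statement into two parts: flatness of $R^+$, then idealwise separatedness. Separatedness should follow from flatness together with a plain $\fm$-adic separatedness statement, by Proposition~\ref{prop:flat-separated-idealwise}.

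\emph{Flatness.} Since $R$ is an excellent regular local ring of characteristic $p>0$, Hochster--Huneke (or Bhatt's theorem for the general excellent case) shows that $R^+$ is a big Cohen--Macaulay $R$-algebra: every system of parameters of $R$ is a regular sequence on $R^+$. Over a regular local ring, a big Cohen--Macaulay module is flat. To see this, I would compute $\Tor_i^R(k,R^+)$ with the Koszul complex on a regular system of parameters $x_1,\dots,x_d$. This complex resolves $k$, and its homology on $R^+$ vanishes in positive degrees because the $x_j$ form an $R^+$-regular sequence. Then $\Tor_1^R(k,R^+)=0$. To get $\Tor_1^R(R/\fp,R^+)=0$ for every prime $\fp$, I would localize. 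The property of being big Cohen--Macaulay localizes because $(R^+)_{\fp}=(R_{\fp})^+$, and $R_{\fp}$ is again excellent regular. So the same Koszul argument gives flatness of $(R^+)_\fp$ over $R_\fp$ for every $\fp$, hence flatness over $R$. Alternatively, I could cite the standard fact that $\pd$ is finite and apply Auslander--Buchsbaum style reasoning for big CM modules. The Koszul route is the cleanest.

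\emph{Separatedness.} I next want $\bigcap_n \fm^n R^+=0$. Take $z\in\bigcap_n\fm^nR^+$. Then $z$ lies in some module-finite normal extension domain $S\subseteq R^+$ with $(S,\fn)$ local, and $S$ is complete-local-like because it is excellent and finite over $R$. Since $\fm^nR^+\cap S$ is contained in the integral closure of $\fm^nS$, a valuation argument shows $z=0$. Concretely, choose a rank-one discrete valuation $v$ on $\Frac(S)$ centered on $\fn$, extended to a $\mathbb{Q}$-valued valuation on $R^+$ that is nonnegative on $R^+$ and positive on $\fm$. Then $v(z)\ge n\cdot\min v(\fm)$ for all $n$. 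A valuation on a Noetherian local domain dominating it has $v(z)<\infty$ for $z\ne0$, so this forces $z=0$. With flatness and separatedness established, Proposition~\ref{prop:flat-separated-idealwise} yields idealwise separatedness.

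\emph{Main obstacle.} I expect the hardest step to be the flatness, that is, knowing $R^+$ is big Cohen--Macaulay for an arbitrary excellent (not necessarily complete) regular local ring. I would reduce to the complete case by noting that $\widehat R\otimes_R R^+$ relates to $\widehat R^+$, or simply cite Hochster--Huneke for excellent local domains. The separatedness step needs only care in choosing the valuation. For that I would use an Abhyankar/Rees valuation of $\fm$ on the normalization of $S$, which takes values in $\mathbb{Z}$, and extend it to the algebraic closure with values in $\mathbb{Q}$.
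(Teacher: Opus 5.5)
Your overall structure is the paper's: Hochster--Huneke makes $R^+$ big Cohen--Macaulay, big Cohen--Macaulay over a regular ring gives flatness, and then Proposition~\ref{prop:flat-separated-idealwise} applies. Your valuation argument for $\bigcap_n\fm^nR^+=0$ is correct, and it supplies a step the paper only asserts. The genuine gap is in the flatness step. The Koszul complex on a regular system of parameters of $R_\fp$ only yields $\Tor_i^{R_\fp}(k(\fp),(R^+)_\fp)=0$, that is, $\Tor_i^R(R/\fp,R^+)_\fp=0$. This does not give $\Tor_1^R(R/\fp,R^+)=0$, because that module is supported on $V(\fp)$ and may live at primes strictly containing $\fp$. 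Nor does it show that $(R^+)_\fp$ is flat over $R_\fp$.

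For a module that is not finitely generated, passing from vanishing of $\Tor_1$ against the residue field to flatness is exactly the local flatness criterion. That criterion needs idealwise separatedness, which is the property you then deduce \emph{from} flatness. So the ``Koszul route'' is circular at precisely this point. The module $M=k(\fp)$ with $0\neq\fp\neq\fm$ shows the danger: it satisfies $\Tor_i^R(k,M)=0$ for all $i$, yet it is not flat.

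To close the gap, you can cite the theorem that balanced big Cohen--Macaulay modules over a regular local ring are flat, as the paper does via \cite{Hoc83}. Alternatively, argue by induction on $\dim R$:
\begin{itemize}
\item Since $(R^+)_\fp=(R_\fp)^+$, induction gives flatness of $R^+$ on the punctured spectrum.
\item Hence for finitely generated $N$ the modules $\Tor_i^R(N,R^+)$ with $i>0$ are $\fm$-torsion.
\item Take a free resolution $F_\bullet$ of $N$ of length at most $d$. Each term of $F_\bullet\otimes_RR^+$ is a sum of copies of $R^+$, which has depth $d\geq i$ (depth measured by local cohomology).
\item The Peskine--Szpiro acyclicity lemma then kills these Tor modules.
\end{itemize}

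Two minor points. First, the finite normal extension $S$ containing $z$ is only semilocal unless $R$ is Henselian; you simply pick a maximal ideal $\fn$, and the remark about $S$ being ``complete-local-like'' is unnecessary. Second, Bhatt's theorem concerns mixed characteristic; in characteristic $p$, Hochster--Huneke \cite{HH} already covers excellent local domains.
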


\begin{proof}
By the theorem of Hochster and Huneke, $R^+$ is a balanced big
Cohen--Macaulay $R$-algebra; see \cite{HH}. Since $R$ is regular,
every balanced big Cohen--Macaulay $R$-module is flat; see
 \cite{Hoc83}.
Consequently, $R^+$ is flat over $R$.
On the other hand, $R^+$ is $\fm$-adically separated. Therefore
Proposition~\ref{prop:flat-separated-idealwise} applies and shows that
\(
I\otimes_RR^+
\)
is $\fm$-adically separated for every ideal $I\subseteq R$. Hence
$R^+$ is $\fm$-adically idealwise separated.
\end{proof}

\begin{remark}\label{rem:hh-does-not-give-total}
Fact~\ref{thm:regular-plus-idealwise-separated} proves idealwise
separatedness, but it does not by itself prove that $R^+$ is totally
separated, Ohm--Rush, or intersection flat. Those properties are
usually strictly stronger. In particular, flatness alone does not
imply any of them.
\end{remark}

We note the following theorem for the convenience of the reader.

\begin{theorem}\label{thm:pure-and-1dcase}(See \cite{Shravan}).
Algebras over $(R,\mathfrak m)$ which are a ``limit of split maps" are indeed $\fm$-adically idealwise separated. For example, \begin{itemize}
\item[(a)] Let $R$ be a $1$-dimensional excellent domain. Then $R^+$ is $\fm$-adically idealwise separated
\item[(b)] Let $R$ be   $F$-pure. Then $R^{\infty}$ is $\fm$-adically idealwise separated.\end{itemize}
\end{theorem}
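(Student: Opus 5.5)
The plan is to prove the general principle first and then read off (a) and (b) from it. Let $A=\varinjlim_\lambda A_\lambda$ be a filtered colimit of $R$-modules. Suppose each $A_\lambda$ is a finitely generated $R$-module and each map $A_\lambda\to A$ is $R$-split. Then the principle states that $A$ is $\fm$-adically idealwise separated.

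Fix an ideal $I$ and an element $\alpha\in\bigcap_n\fm^n(I\otimes_RA)$. Since tensor products commute with filtered colimits, $I\otimes_RA=\varinjlim_\lambda I\otimes_RA_\lambda$, so $\alpha$ is the image of some $\beta\in I\otimes_RA_\lambda$. Choose a retraction $\rho\colon A\to A_\lambda$ of $A_\lambda\to A$. Then $1_I\otimes\rho$ is a retraction of $I\otimes A_\lambda\to I\otimes A$, so this map is injective and $(1_I\otimes\rho)(\alpha)=\beta$. The map $1_I\otimes\rho$ is $R$-linear, so it carries $\fm^n(I\otimes A)$ into $\fm^n(I\otimes A_\lambda)$. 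Hence $\beta\in\bigcap_n\fm^n(I\otimes_RA_\lambda)$. But $I\otimes_RA_\lambda$ is a finitely generated module over the Noetherian local ring $R$, so Krull's intersection theorem gives $\beta=0$, and therefore $\alpha=0$. This settles the general principle, and no real obstacle is expected here.

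For (b), take $A_\lambda={}^{f^e}\!R$ with $R^\infty=\varinjlim_e {}^{f^e}\!R$. $F$-purity says $R\to{}^{f}\!R$ is pure. Composing, $R\to{}^{f^e}\!R$ is pure, and then ${}^{f^e}\!R\to R^\infty$ is pure as well. To get this, identify ${}^{f^e}\!R\to{}^{f^{e+1}}\!R$ with $f$ applied to the twisted structure and use that purity of $f$ is preserved under restriction of scalars along $f^e$. The \emph{main difficulty} is that purity is weaker than splitting, and the argument above used a retraction. I will avoid this by working with purity directly. If $N\to M$ is pure, then $I\otimes N\to I\otimes M$ is injective. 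Moreover, its image meets $\fm^n(I\otimes M)$ exactly in $\fm^n(I\otimes N)$: purity means $L\otimes N\to L\otimes M$ is injective for $L=I/\fm^nI$ (or $L=R/\fm^n\otimes I$), and $(I\otimes N)/\fm^n(I\otimes N)=L\otimes N$. So the retraction is unnecessary, and the same Krull argument goes through. This only needs each ${}^{f^e}\!R$ to be Noetherian-separated for the $R$-module structure. If $R$ is not $F$-finite, I would note that $\fm^n$ acting on ${}^{f^e}\!R$ is $(\fm^{[p^e]})^n$, and these ideals of $R$ are cofinal with powers of $\fm$. So Krull's theorem for the ring $R$ still applies to $I\otimes{}^{f^e}\!R$, viewed as a finitely generated module over $R$ via its own ring structure.

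For (a), write $R^+=\varinjlim S$ over module-finite normal extensions $R\subseteq S$. In dimension one these $S$ are Dedekind, so after localizing or completing, $S$ is finite torsion-free over the DVR or Dedekind normalization. I would reduce to $R$ normal, hence a DVR and so regular, and then use the purity version of the argument: $S$ is free over the DVR $R$, so $R$ is a direct summand of $S$, and each $S\to R^+$ is pure because $R^+$ is flat over $R$. The nonnormal case is the delicate point. There I would pass to the normalization $\overline R$, which is finite over $R$ by excellence, and compare $\fm$-adic and $\fm\overline R$-adic topologies, which agree since $\overline R$ is finite over $R$. Finally, idealwise separatedness over $R$ reduces to total separatedness over $\overline R$, because $I\otimes_RR^+=(I\otimes_R\overline R)\otimes_{\overline R}R^+$.
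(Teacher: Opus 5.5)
Your general principle and your proof of (b) are correct. In two respects they are more careful than the paper's sketch. First, you work with $I\otimes_R R^\infty$ rather than $IR^\infty$. Second, you handle the non-$F$-finite case, where ${}^{f^e}\!R$ is not module-finite over $R$: you note that $\fm^n$ acts on $I\otimes_R{}^{f^e}\!R$ through $(\fm^{[p^e]})^n$, which is cofinal with the powers of $\fm$ for the ring structure of ${}^{f^e}\!R$.

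The gap is in (a), at its only load-bearing step. You claim each $S\to R^+$ is pure ``because $R^+$ is flat over $R$,'' and that is not a valid reason. Over a DVR, since $R^+$ is flat, $S\to R^+$ is pure if and only if $R^+/S$ is flat, i.e.\ torsion-free. This depends on $S$, not on $R^+$. For example, take $R=k[[t]]$, $u=t^{1/2}$, and the free but non-normal extension $S=k[[u^2,u^3]]$. Then $tu=u^3\in S$ while $u\notin S$, so $u^3$ is a nonzero element of the kernel of $S/tS\to R^+/tR^+$.

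The missing idea, which is exactly the paper's argument, is normality of $S$. Let $S\subseteq S'$ be module-finite normal extensions of $R$ inside $R^+$. Then $S'/S$ is torsion-free over $S$: if $sx\in S$ with $0\neq s\in S$ and $x\in S'$, then $x\in\Frac(S)$ is integral over $S$, hence $x\in S$. So $S'/S$ is projective over the semi-local Dedekind domain $S$. Hence $S\to S'$ splits $S$-linearly, a fortiori $R$-linearly, and $S\to R^+$ is pure over $R$ as a filtered colimit of split injections. This needs no normality of $R$. Since each such $S$ is finite over $R$ by excellence, your general principle (purity version) then applies directly over $R$.

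Your reduction to the normal case, by contrast, does not close as written. $\overline R$ is only a semi-local Dedekind domain, not a DVR, and you need total rather than idealwise separatedness of $R^+$ over it, which your DVR argument does not cover. The natural way to get it is again purity of $S\to R^+$ over $\overline R$, i.e.\ the same torsion-freeness input. Once you have that, the detour through $\overline R$ and the comparison of topologies are superfluous.
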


\begin{proof}
In case {\rm (a)}, write $R^+$ as the directed union of the module-finite
normal extensions of $R$. The transition maps split: the corresponding quotients are finite
torsion-free, hence projective since they are (semi-local) Dedekind domains.
In case {\rm (b)}, recall that
\(
R^\infty=\varinjlim_e R^{1/p^e}.
\)
Since $R$ is $F$-pure, the transition maps are pure.
Thus any element of
\(
\bigcap_{n\geq 1}\mathfrak m^n IR^\infty
\)
may be brought to a finite $R$-module containing it. There
it belongs to the intersection of all its $\mathfrak m$-adic powers, which
is zero by the Krull intersection theorem. Hence $R^\infty$ is
$\mathfrak m$-adically idealwise separated.
\end{proof}

\begin{remark}\label{rem:bhargavisawesome}
Note that the slogan of Theorem \ref{thm:pure-and-1dcase} is \textbf{not} satisfied for $R^{+}$. Bhatt gives examples of non-existence of small Cohen-Macaulay algebras \cite{BhattSmallCM}, and since splinters are Cohen-Macaulay (in positive characteristic) $R^{+}$ cannot be a limit of split maps.
\end{remark}

We hope to pursue this question in the future:

\begin{question}
    In light of Remark \ref{rem:bhargavisawesome}, can one use \emph{lim Cohen-Macaulay sequences} (see \cite{BHM}) to study  $(Q_3)$ from the introduction?  
\end{question}

As outlined in previous items, it is well known that $R^{+}$ is $\fm$-adically separated and the question of whether it is $\fm$-adically \emph{idealwise} separated is intimately tied to the homological questions we discussed. This question has deep implications - a positive answer \textit{implies} \cite[Theorem 4.13]{bhatt2019Regular}, which \emph{a priori} uses Cohen-Macaulayness of $R^{+}$. We record a result outside the regular case.

\begin{theorem}\label{thm:regular-cover-idealwise-separated}
Let $(R,\fm,k)$ be an excellent local domain of characteristic $p>0$. Suppose
that there is a module-finite local extension
\(
(R,\fm)\to(S,\fn)
\)
such that $S$ is regular and $S/R$ is flat on the punctured spectrum of $R$.
Then $R^+$ is $\fm$-adically idealwise separated as an $R$-module.
\end{theorem}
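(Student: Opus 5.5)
Since $R\subseteq S\subseteq R^+$ with $S$ module-finite over $R$, we have $S^+=R^+$. The plan is to pass through $S$, where Fact~\ref{thm:regular-plus-idealwise-separated} applies, and to control the discrepancy between tensoring over $R$ and over $S$ using the flatness of $S/R$ on the punctured spectrum.

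Fix an ideal $I\subseteq R$ and an element $\alpha\in\bigcap_n\fm^n(I\otimes_R R^+)$. The first step is to compare $I\otimes_R R^+$ with $I\otimes_R S\otimes_S R^+ = (I\otimes_R S)\otimes_S R^+$, which is literally the same module since $R^+$ is an $S$-algebra. Next we write $N:=I\otimes_R S$, a finitely generated $S$-module, and use $\fm S\subseteq \fn$ together with $\fn^t\subseteq \fm S$ for some $t$ (finiteness makes $\fm S$ $\fn$-primary). Then the $\fm$-adic and $\fn$-adic filtrations on $N\otimes_S R^+$ are cofinal, so $\alpha\in\bigcap_n \fn^n(N\otimes_S R^+)$. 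By Fact~\ref{thm:regular-plus-idealwise-separated}, $R^+$ is flat over the regular ring $S$, hence $N\otimes_S R^+$ embeds into $F\otimes_S R^+$ for a presentation-type comparison. The cleaner route is to use flatness of $R^+$ over $S$ to get $\fn^n(N\otimes_S R^+)=(\fn^nN)\otimes_S R^+$ and $\bigcap_n (\fn^n N)\otimes_S R^+ = (\bigcap_n \fn^nN)\otimes_S R^+=0$. That last intersection is where flatness must be strengthened: I would invoke that $R^+$ is totally/Mittag--Leffler-type separated over $S$. Since $S$ is regular, $R^+$ is a balanced big CM algebra, and for finitely generated $N$ one can reduce via a finite free resolution of $N$ (finite length, $S$ regular) to the case of submodules of free modules, where Proposition~\ref{prop:flat-separated-idealwise} and Krull's intersection theorem do the job. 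Thus $N\otimes_S R^+$ is $\fn$-adically separated, so $\alpha=0$.

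This argument so far never used flatness of $S/R$. The gap it hides is in identifying $I\otimes_R R^+$ with something controlled: the identification $I\otimes_R R^+ = (I\otimes_R S)\otimes_S R^+$ is fine, but $N=I\otimes_R S$ may have torsion, and the separatedness of $N\otimes_S R^+$ for arbitrary finitely generated $N$ is exactly the total separatedness of $R^+$ over $S$. I expect this to be the main obstacle, namely proving total (not merely idealwise) separatedness over regular $S$. The hypothesis is used here as follows. Since $S$ is $R$-flat on the punctured spectrum, $\Tor_1^R(R/I,S)$ and the torsion of $N$ are supported at $\fm$, hence of finite length. So $N$ sits in an exact sequence $0\to T\to N\to IS\to 0$ with $T$ of finite length and $IS\subseteq S$ an ideal. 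Tensoring with the flat $S$-module $R^+$ gives $0\to T\otimes_S R^+\to N\otimes_S R^+\to IR^+\to 0$. Here $IR^+\subseteq R^+$ is separated because $R^+$ is, and $T\otimes_S R^+$ is killed by a power of $\fn$. For the latter, I would show that $\fn^n(N\otimes_S R^+)\cap (T\otimes_S R^+)$ vanishes for $n\gg0$. This follows from Artin--Rees on $T\subseteq N$ over $S$, giving $\fn^nN\cap T\subseteq \fn^{n-c}T=0$, and then flatness of $R^+$ over $S$ transports the equality $\fn^nN\cap T=0$ after tensoring. Combining the two pieces shows $\bigcap_n\fn^n(N\otimes_S R^+)=0$, and hence $\alpha=0$.
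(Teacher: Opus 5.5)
Your final argument is correct and essentially the paper's proof: the exact sequence $0\to T\to I\otimes_R S\to IS\to 0$ with $T=\Tor_1^R(R/I,S)$ of finite length, flatness of $S^+=R^+$ over $S$, Artin--Rees on $T\subseteq I\otimes_R S$ to get $\fn^n(I\otimes_R S)\cap T=0$ for $n\gg0$, and separatedness of $IR^+\subseteq R^+$ (the paper uses $\fm S$ in place of $\fn$, which makes no difference by cofinality). The middle detour should be deleted. There you assert total separatedness of $R^+$ over $S$, argue via free resolutions, and claim that $\bigcap_n(\fn^nN)\otimes_S R^+=(\bigcap_n\fn^nN)\otimes_S R^+$. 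None of this is justified, since flat base change need not commute with infinite intersections, and your last paragraph does not use it.
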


\begin{proof}

Summary: using Fact \ref{thm:regular-plus-idealwise-separated} (which uses the Cohen-Macaulayness of $R^{+}$) we know that $R^{+} = S^{+}$ is $\fm$-adically idealwise separated as an $S$-module. However the failure of $R^{+}$ being $m$-adically idealwise separated as an $R$ module is captured by the kernel of \(
\mu_I\colon I\otimes_RS\to IS
\) 
whose kernel is of finite length. Since we are taking infinite intersections of powers of $\fm$ an Artin-Rees argument does the job.  

Fix an algebraic closure L of $\Frac(S)$. Since $\Frac(S)$ is algebraic
over $\Frac(R)$, the field $L$ is also an algebraic closure of
$\Frac(R)$. Thus, upon taking both absolute integral closures inside $L$,
we have
\(
S^+=R^+.
\)
Since $S$ is module-finite over the excellent ring $R$, it is excellent. The
theorem of Hochster and Huneke implies that $S^+$ is a balanced big
Cohen--Macaulay $S$-algebra; see \cite{HH}. Since $S$ is regular, every balanced big Cohen--Macaulay $S$-module is
flat over $S$; see \cite{Hoc83}. Consequently,
$S^+=R^+$ is flat over $S$.
Let $I\subseteq R$ be an ideal. We must prove that
\(
I\otimes_RR^+
\)
is $\fm$-adically separated. Consider the natural multiplication map of finite
$S$-modules
\(
\mu_I\colon I\otimes_RS\to IS
\)
and set
\(
A_I:=I\otimes_RS\)
{and} \(K_I:=\ker(\mu_I).\)
We first show that $K_I$ has finite length. Let $\fp\neq\fm$. By assumption, $(S/R)_{\fp}$ is flat over $R_{\fp}$.
Localizing the exact sequence
\(
0\to R\to S\to S/R\to 0
\)
at $\fp$, we see that $S_{\fp}$ is an extension of two flat
$R_{\fp}$-modules, and hence is flat over $R_{\fp}$. Therefore the
multiplication map
\(
I_{\fp}\otimes_{R_{\fp}}S_{\fp}
\to
I_{\fp}S_{\fp}
\)
is injective. Consequently,
\(
(K_I)_{\fp}=0.
\)
Thus
\(
\operatorname{Supp}_R(K_I)\subseteq\{\fm\}.
\)
Since $K_I$ is finitely generated over $R$, it has finite length. In
particular, there is an integer $e>0$ such that
\(
\fm^eK_I=0.
\)
Tensor the exact sequence
\[
0\longrightarrow K_I\longrightarrow A_I
\xrightarrow{\mu_I}IS\longrightarrow0
\]
over $S$ with the flat $S$-algebra $S^+$. We obtain an exact sequence
\[
0\longrightarrow K_I\otimes_SS^+
\longrightarrow A_I\otimes_SS^+
\longrightarrow IS\otimes_SS^+
\longrightarrow0 \quad(\ast).
\]
By associativity of tensor products and the equality $S^+=R^+$, the middle
term is naturally identified with
\(
A_I\otimes_SS^+
\cong
I\otimes_RS^+
=
I\otimes_RR^+.
\)
Moreover, since $S^+$ is flat over $S$
\(
IS\otimes_SS^+
\cong
IS^+
\subseteq
S^+.
\)
By {Fact}~\ref{thm:regular-plus-idealwise-separated}, $S^+$ is $\fn$-adically separated. Since
$\fm S\subseteq\fn$, we have
\(
\fm^nS^+\subseteq\fn^nS^+
\)
for every $n$. It follows that $S^+$, and hence its submodule $IS^+$, is
$\fm$-adically separated.
It remains to control the kernel $K_I\otimes_SS^+$. 
Apply the Artin-Rees
lemma to the inclusion
\(
K_I\subseteq A_I
\)
of finite $S$-modules and the ideal $\fm S$. There is an integer $c>0$ such
that, for every $n\geq c$,
\(
K_I\cap\fm^nA_I
\subseteq
\fm^{n-c}K_I.
\)
Since $\fm^eK_I=0$, it follows that
\(
K_I\cap\fm^nA_I=0\)
{for all } \(n\geq c+e.
\)
Flat base change preserves intersections of two submodules. Hence, inside
$A_I\otimes_SS^+$, we have
\[
\bigl(K_I\otimes_SS^+\bigr)
\cap
\fm^n\bigl(A_I\otimes_SS^+\bigr)
=
\bigl(K_I\cap\fm^nA_I\bigr)\otimes_SS^+.
\]
Consequently,
\(
\bigl(K_I\otimes_SS^+\bigr)
\cap
\fm^n\bigl(A_I\otimes_SS^+\bigr)
=0\)
{for all }\(n\gg0.
\)
Now let
\(
\alpha\in
\bigcap_{n\geq1}
\fm^n\bigl(A_I\otimes_SS^+\bigr).
\)
Its image in $IS\otimes_SS^+$ belongs to
\(
\bigcap_{n\geq1}
\fm^n(IS\otimes_SS^+)=0,
\)
so
\(
\alpha\in K_I\otimes_SS^+ ,
\) see $(\ast)$.
On the other hand, $\alpha$ belongs to
\(
\fm^n\bigl(A_I\otimes_SS^+\bigr)
\)
for every $n$. Taking $n\gg0$ in the preceding intersection equality gives
$\alpha=0$. Therefore
\(
\bigcap_{n\geq1}\fm^n(I\otimes_RR^+)=0.
\)
Since $I\subseteq R$ was arbitrary, $R^+$ is $\fm$-adically idealwise
separated.
\end{proof}

\begin{example}\label{ver}
    The Veronese subring  $R$ of a polynomial ring $S$ over a field is regular at every prime ideal except at the unique homogeneous maximal ideal. Recall from \cite[23.1]{mat}
that $S/R$ is flat on the punctured spectrum of $R$.
Thus,  Veronese rings are sample of Theorem~\ref{thm:regular-cover-idealwise-separated}, and so $R^+$  is
$\fm$-adically idealwise separated.
\end{example}

\begin{corollary}\label{cor:codimension-one-regular-cover}
Let $(R,\fm)$ be an excellent local domain of characteristic $p>0$ with
$\dim(R)\leq2$. Suppose that there is a module-finite local extension
\(
R\to(S,\fn)
\)
such that $S$ is regular and $S/R$ is flat in codimension one. Then $R^+$ is
$\fm$-adically idealwise separated.
\end{corollary}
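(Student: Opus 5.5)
The plan is to reduce to Theorem~\ref{thm:regular-cover-idealwise-separated} by upgrading ``flat in codimension one'' to ``flat on the punctured spectrum'' using $\dim(R)\leq 2$. If $\dim(R)\leq 1$, then either $R$ is a field, in which case every module is idealwise separated trivially, or $\dim(R)=1$. In the one-dimensional case we can either invoke Theorem~\ref{thm:pure-and-1dcase}(a) directly, or observe that every $\fp\neq\fm$ has height at most $0$, so flatness in codimension one already is flatness on the punctured spectrum. Either route concludes this case.

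Now assume $\dim(R)=2$ and let $\fp\neq\fm$ be a prime of $R$. Since $R$ is a local domain of dimension $2$, we have $\height(\fp)\leq \dim(R)-1=1$, because $\fp\subsetneq\fm$ forces $\height(\fp)<\height(\fm)=2$. Hence, by the hypothesis of flatness in codimension one (Definition~\ref{def:flat-codimension}), $(S/R)_{\fp}$ is flat over $R_{\fp}$. Thus $S/R$ is flat on the punctured spectrum of $R$. The remaining hypotheses of Theorem~\ref{thm:regular-cover-idealwise-separated} are identical: $R$ is excellent local of characteristic $p$, and $R\to(S,\fn)$ is module-finite, local, with $S$ regular. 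Applying that theorem gives that $R^+$ is $\fm$-adically idealwise separated.

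There is essentially no obstacle. The only point deserving a remark is that the punctured spectrum of a two-dimensional local domain consists exactly of primes of height $\leq 1$, which is the height inequality just used. For the finite-length argument, no catenarity or dimension formula is needed, since we only use $\height(\fp)<\height(\fm)$ for $\fp\subsetneq\fm$. The degenerate case where $S/R$ is zero or $R$ is a field is covered trivially.
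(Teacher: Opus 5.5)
Your proof is correct and is essentially the paper's argument: when $\dim(R)\leq 2$, every prime $\fp\neq\fm$ has height at most one, so flatness in codimension one already gives flatness on the punctured spectrum, and Theorem~\ref{thm:regular-cover-idealwise-separated} applies. Your separate treatment of $\dim(R)\leq 1$ is harmless but unnecessary, since the same height bound covers those cases uniformly.
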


\begin{proof}
Every prime $\fp\neq\fm$ has height at most one. Hence $S/R$ is flat on the
punctured spectrum, and the assertion follows from
Theorem~\ref{thm:regular-cover-idealwise-separated}.
\end{proof}

\begin{corollary}
\label{cor:idealwise-separated}
Let $R$ be an excellent $\ast$-local domain of prime characteristic with isolated singularity such that its integral closure is regular. Then  $R^+$ is $\mathfrak{m}$-adically idealwise separated.
\end{corollary}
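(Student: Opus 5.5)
The plan is to deduce this from Theorem~\ref{thm:regular-cover-idealwise-separated}, applied with $S:=\overline{R}$, the integral closure of $R$ in $\Frac(R)$. Since the statement is set in the $\ast$-local case, I will first pass to the localization at the unique homogeneous maximal ideal $\fm$. The theorem is stated for local rings, and the separatedness condition only involves $\fm$-adic topology on $R$-modules, so working at $\fm$ costs nothing.

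\textbf{Step 1: $S$ is a module-finite, regular, local extension.} Because $R$ is excellent, it is Nagata, so $S=\overline{R}$ is module-finite over $R$. By hypothesis $S$ is regular. To see that $S$ is local, I will use the $\ast$-local structure: the normalization of a positively graded domain is again graded. The degree-zero part of $S$ is integral over the field $R_0$, and $S$ is a normal domain, so $S_0$ is a field. Hence $S$ is $\ast$-local with homogeneous maximal ideal $\fn$ lying over $\fm$. After localizing at $\fm$, and noting that $S_\fm$ has only the maximal ideal $\fn S_\fm$ (every maximal ideal of $S_\fm$ contracts to $\fm$), I obtain a module-finite local extension $(R,\fm)\to(S,\fn)$ with $S$ regular.

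\textbf{Step 2: $S/R$ is flat on the punctured spectrum.} Let $\fp\neq\fm$. Since $R$ has an isolated singularity, $R_\fp$ is regular, and in particular normal. Normalization commutes with localization, so $S_\fp=\overline{R_\fp}=R_\fp$, which gives $(S/R)_\fp=0$. This is certainly flat.

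\textbf{Step 3: conclude.} With Steps 1 and 2 in hand, Theorem~\ref{thm:regular-cover-idealwise-separated} applies and shows that $R^+$ is $\fm$-adically idealwise separated. Here $R^+=S^+$, so it does not matter over which ring the absolute integral closure is formed.

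I expect the main obstacle to be the bookkeeping in Step 1: checking that the integral closure is local, and that the graded-to-local passage is legitimate. If the gradedness argument turns out to be awkward, the fallback is that the $\fm$-adic separation statement only sees the completion of $R$. Completion of an excellent domain with the given properties stays a domain, and the normalization of an analytically irreducible local domain is local, which gives locality of $S$ directly.
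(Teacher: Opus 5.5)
Your route is the paper's. Its proof is the single line ``take $S=\overline{R}$ and apply Theorem~\ref{thm:regular-cover-idealwise-separated}, since $S/R$ is flat on the punctured spectrum.'' Your Steps~1--2 supply what that line leaves implicit: finiteness of $\overline{R}$ from excellence, and $(S/R)_\fp=0$ because $R_\fp$ is regular, hence normal, for $\fp\neq\fm$.

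The step that is not justified as written is your opening reduction from the $\ast$-local ring $R$ to $R_\fm$. Write $M:=I\otimes_RR^+$. Since $(R_\fm)^+=(R^+)_\fm$, the Theorem applied to $R_\fm\to S_\fm$ gives only $\bigcap_n\fm^nM_\fm=0$. For a non-local ring this does not imply $\bigcap_n\fm^nM=0$. For example, over $k[t]$ with $\fm=(t)$, the module $N=k[t]/(t-1)$ has $\fm N=N\neq0$ but $N_\fm=0$. So ``the condition only involves the $\fm$-adic topology'' is not a valid reason.

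What you need is that $M\to M_\fm$ is injective, and proving this uses the isolated-singularity hypothesis a second time. Suppose $x\in M$ and $sx=0$ with $s\in R\setminus\fm$.
\begin{itemize}
\item At $\fm$: $x/1=0$ in $M_\fm$, since $s$ is a unit there.
\item At a prime $Q\neq\fm$: $R_Q$ is an excellent regular local ring, so $(R^+)_Q=(R_Q)^+$ is flat over $R_Q$, as in Fact~\ref{thm:regular-plus-idealwise-separated}. Hence $M_Q\cong I_Q(R_Q)^+$ sits inside a domain, and $s\neq0$ forces $x/1=0$ in $M_Q$.
\end{itemize}
So $x=0$, and $\bigcap_n\fm^nM$ embeds into $\bigcap_n\fm^nM_\fm=0$. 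The paper's one-line proof passes over this point silently.

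There is also a smaller gap in Step~1. The remark that every maximal ideal of $S_\fm$ contracts to $\fm$ does not by itself give uniqueness of the maximal ideal. You need that $\fn$ is the only prime of $S$ lying over $\fm$. This follows from $\fn=\sqrt{\fm S}$: a homogeneous $s\in S$ of positive degree satisfies a homogeneous integral equation over $R$ whose lower coefficients lie in $\fm$, so $s^N\in\fm S$.
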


\begin{proof} 
Let $S:=\overline{R}$.
The desired property follows  from Theorem~\ref{thm:regular-cover-idealwise-separated}, as $S/R$ is flat on the punctured spectrum of $R$.
\end{proof}

\begin{remark}
 Of course, there are interesting classes of examples of singularities that satisfy the hypothesis of Theorem \ref{thm:regular-cover-idealwise-separated} and  Corollary \ref{cor:codimension-one-regular-cover}, Veronese rings (see {Example} \ref{ver}), isolated quotient singularities, all with suitable tameness assumptions ($p$ does not divide the cardinality of the group or extension), and rational double points, without any tameness assumptions, see below.
\end{remark}

\begin{corollary}\label{cor:rdp-idealwise-separated}
Let $(R,\fm)$ be a complete rational double point over an algebraically
closed field of characteristic $p>0$. Then $R^+$ is $\fm$-adically
idealwise separated.
\end{corollary}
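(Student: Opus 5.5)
The plan is to reduce to Corollary~\ref{cor:codimension-one-regular-cover}, or directly to Theorem~\ref{thm:regular-cover-idealwise-separated}. A complete rational double point $R$ is a two-dimensional excellent normal local domain with an isolated singularity, so $\dim(R)=2$. It therefore suffices to produce a module-finite local extension $R\to(S,\fn)$ with $S$ regular and $S/R$ flat on the punctured spectrum. Since $R$ is normal and $\dim R=2$, every non-maximal prime $\fp$ has $R_\fp$ regular, namely a field or a DVR. Over a DVR, the torsion-free finite module $S_\fp$ is free. Moreover $(S/R)_\fp$ is torsion-free over $R_\fp$ once the inclusion $R\to S$ splits, or once $R$ is integrally closed in $\Frac(R)$ and $S$ is a domain. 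Hence flatness of $S/R$ on the punctured spectrum will be automatic. The whole content is thus the existence of a finite regular cover $S$.

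To construct $S$, I will use the classification of rational double points in every characteristic, due to Artin and Lipman. By Lipman, a complete two-dimensional rational double point has finite local fundamental group $\pi_1^{\mathrm{loc}}$. More usefully, Artin's classification presents each $A_n,D_n,E_6,E_7,E_8$ singularity (including the non-tame forms $D_n^r,E_n^r$ in small characteristic) explicitly. The first thing I would try is a direct, uniform approach. For $A_n$, take $R=k[[u^{n+1},uv,v^{n+1}]]\subset S=k[[u,v]]$, which is module-finite and regular in every characteristic, being a toric invariant subring for the group scheme $\mu_{n+1}$. For the remaining types, I would appeal to the fact that every rational double point is a quotient of $k[[u,v]]$ by a finite linearly reductive group scheme, or more generally is dominated by a regular local ring via a finite morphism. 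In the wild cases, $D$ and $E$ in characteristics $2,3,5$, I would invoke Artin's explicit equations together with the known result that each such $R$ admits a finite purely inseparable or Frobenius-type cover by a regular ring. Concretely, $R\subseteq k[[x,y]]$ with $k[[x,y]]^{p^e}\subseteq R$, which comes from $R$ being a hypersurface in $k[[x,y,z]]$ with $z^{p^e}\in k[[x,y]]$ after coordinate change. Failing such an inclusion, one can take $S=R^{1/p^e}\cdot k[[x,y]]$ or, more simply, use that $R$ is finite over a regular subring $k[[x,y]]$ and pass to a regular finite extension.

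In fact a cleaner route avoids the classification entirely. By Cohen's structure theorem and Noether normalization, $R$ is finite over $A=k[[x,y]]$. Take $S$ to be a regular local ring finite over $R$. Two candidates are the normalization of $R$ in a suitable finite extension of $\Frac(R)$ that is unramified in codimension one over the smooth locus, and a component of $R\otimes_A A^{1/p^e}$ normalized. Purity of the branch locus (Zariski--Nagata) on the regular locus of $R$ then gives regularity off $\fm$, and Lipman's theorem that a rational double point has a \emph{finite} universal cover in codimension one, i.e.\ finite $\pi_1$ of the punctured spectrum, gives a quasi-\'etale cover $S$. That $S$ is again a rational double point with trivial local fundamental group, hence regular (Lipman/Artin: the only complete rational double point with trivial local $\pi_1$, in the appropriate sense of the étale/inseparable classification, is the regular one, the $E_8$ case needing care). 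Such an $S$ is \'etale over $R$ off $\fm$, so $S$ is flat over $R$ on the punctured spectrum, and by the torsion-free argument above so is $S/R$.

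The main obstacle is this regular-cover step in the wild characteristics $p=2,3,5$. There the local fundamental group can be trivial while $R$ is singular; $E_8$ in characteristic $2$ is the relevant example. So the quasi-\'etale cover alone may not reach a regular ring, and one must supplement it with an inseparable cover, for instance $S=k[[x^{1/p},y^{1/p}]]$-type extensions or the Frobenius-compatible equations $z^{p}=\dots$ in Artin's normal forms. Flatness off $\fm$ is then not automatic from \'etaleness, and I would verify it directly: $S_\fp$ is finite torsion-free over the DVR or field $R_\fp$, hence free. Once $S$ is obtained, Corollary~\ref{cor:codimension-one-regular-cover} finishes the proof.
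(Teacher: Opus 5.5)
Your reduction and your flatness argument are the same as the paper's. For $\fp\neq\fm$, $R_\fp$ is a field or a DVR, and $(S/R)_\fp$ is torsion-free because $R$ is normal and $S$ is a domain. So once a module-finite regular local extension $S$ of $R$ exists, Theorem~\ref{thm:regular-cover-idealwise-separated} finishes the proof.

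The gap is that you never actually produce $S$ outside type $A_n$. The paper does not construct $S$ either; it takes its existence from Artin's paper on coverings of rational double points. None of your substitutes for that citation works:
(i) Not every rational double point is a quotient of $k[[u,v]]$ by a linearly reductive group scheme. Such invariant rings are direct summands of $k[[u,v]]$, hence $F$-pure. But $E_8^0$, given by $f=z^2+x^3+y^5$, is not $F$-pure in characteristics $2,3,5$: by Fedder's criterion, $f^{p-1}\in\fm^{[p]}$.
(ii) An inclusion $k[[x,y]]^{p^e}\subseteq R\subseteq k[[x,y]]$ makes $\Spec k[[x,y]]\to\Spec R$ a universal homeomorphism. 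Combined with Zariski--Nagata purity, this forces the punctured spectrum of $R$ to be \'etale simply connected. That fails, for example, for $E_6$ in characteristic $5$. It is in your ``wild'' list, but it is the tame quotient of $k[[u,v]]$ by the binary tetrahedral group, so its local $\pi_1$ is nontrivial.
(iii) If $\Frac(R)$ is separable over $\Frac(A)$, then $R\otimes_AA^{1/p^e}$ has generic fibre the field $\Frac(R)^{1/p^e}$. Its normalization is just $R^{1/p^e}\cong R$, which is still singular.
(iv) The universal quasi-\'etale cover can be a singular simply connected rational double point, as you note yourself. Your proposed ``supplementary inseparable cover'' is never checked against the list of such singularities.

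So the one substantive ingredient is missing, not proved. To close the gap, you have two options. You can quote Artin's result that every rational double point over an algebraically closed field of characteristic $p$ admits a module-finite extension $R\to S$ with $S$ regular local, as the paper does. Or you can carry out a genuine case-by-case construction on Artin's normal forms. Once $S$ is available, your torsion-freeness argument and the appeal to Theorem~\ref{thm:regular-cover-idealwise-separated} go through verbatim.
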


\begin{proof}
By Artin \cite[p.~11]{ArtRDP}, there is a module-finite
local extension $R\to S$ with $S$ regular. Since $R$ is a two-dimensional
normal domain, $R_{\fp}$ is a field or a DVR for every $\fp\neq\fm$.
Moreover, $(S/R)_{\fp}$ is torsion-free: if $0\neq a\in R_{\fp}$ and
$as\in R_{\fp}$, then $s\in\Frac(R_{\fp})$ is integral over $R_{\fp}$,
hence $s\in R_{\fp}$. Thus $S/R$ is flat on the punctured spectrum, and
Theorem~\ref{thm:regular-cover-idealwise-separated} applies.
\end{proof}

The above corollaries suggest the following.

\begin{question}
    If $R\to S$ is a finite extension of local domains, flat outside the closed point. If $S^{+}$ ($= R^{+}$) is $\mathfrak{m}$-adically idealwise separated as a $S$-module, is $R^{+}$ $\mathfrak{m}$-adically idealwise separated as a $R$-module?
\end{question}

\begin{remark}
Sanity check - Note that if $R \to S$ is flat in the above setup, then $K_I = 0$ and it follows that $I \otimes_{R} R^{+} \cong I \otimes_{R} S \otimes_{S} S^{+} \cong IS \otimes_{S} S^{+}$ which is separated. 
\end{remark}

\end{document}